\documentclass[11pt,letterpaper]{amsart}
\usepackage[foot]{amsaddr}

\usepackage{mathtools}
\usepackage{amsmath}
\usepackage[T1]{fontenc}
\usepackage[latin9]{inputenc}
\usepackage{geometry}
\usepackage{wrapfig}
\usepackage{multicol}
\usepackage{graphicx}
\usepackage{soul}
\usepackage{xcolor}
\usepackage{amssymb}
\usepackage{placeins}
\usepackage{bbm}
\usepackage{cite}
\usepackage{caption}
\usepackage{enumerate}
\usepackage{afterpage}
\usepackage{enumitem}
\usepackage{bmpsize}
\usepackage{hyperref}
\usepackage{tabu}
\usepackage{enumitem}
\numberwithin{equation}{section}
\usepackage{stmaryrd}
\usepackage{tikz}
\usetikzlibrary{matrix,graphs,arrows,positioning,calc,decorations.markings,shapes.symbols}
\usepackage{ifthen}
\usetikzlibrary{math}
\usetikzlibrary{matrix,graphs,arrows,positioning,calc,decorations.markings,shapes.symbols}

\makeatletter
\renewcommand{\email}[2][]{%
  \ifx\emails\@empty\relax\else{\g@addto@macro\emails{,\space}}\fi%
  \@ifnotempty{#1}{\g@addto@macro\emails{\textrm{(#1)}\space}}%
  \g@addto@macro\emails{#2}%
}
\makeatother

\newtheorem{theorem}{Theorem}[section]
\newtheorem{lemma}[theorem]{Lemma}
\newtheorem{proposition}[theorem]{Proposition}

{ \theoremstyle{definition}
\newtheorem{definition}[theorem]{Definition}}
{ \theoremstyle{remark}
\newtheorem{remark}[theorem]{Remark}}

\newcommand{\1}{\mathbbm{1}}
\newcommand{\abs}[1]{\lvert #1 \rvert}
\newcommand{\Abs}[1]{\left\lvert #1 \right\rvert}
\newcommand{\Airy}{\mathrm{Airy}}

\newcommand{\calA}{\mathcal{A}}
\newcommand{\calC}{\mathcal{C}}

\newcommand{\calI}{\mathcal{I}}
\newcommand{\diff}{\mathop{}\!\mathrm{d}}
\renewcommand{\diff}{d}

\newcommand{\e}{\varepsilon}
\newcommand{\E}{\mathbb{E}}

\newcommand{\im}{\mathsf{i}}

\newcommand{\ckets}[1]{\left[ #1 \right]}
\newcommand{\ockets}[1]{\mathopen{}\left[ #1 \right]\mathclose{}}

\newcommand{\paren}[1]{\left( #1 \right)}

\newcommand{\ang}[1]{\langle #1 \rangle}
\newcommand{\R}{\mathbb{R}}
\newcommand{\sign}{\operatorname{sign}}
\newcommand{\sub}{\subseteq}

{
\theoremstyle{definition}

}

\renewcommand{\Re}{\operatorname{\mathsf{Re}}}
\renewcommand{\Im}{\operatorname{\mathsf{Im}}}

\newcommand{\refbeth}{\hat{\gamma}}
\newcommand{\Ai}{\mathrm{Ai}}

\newcommand{\crit}{\mathsf{z}}
\newcommand{\GFF}{\mathbf{h}}
\newcommand{\GFFpullback}{\mathbf{h}^{\crit}}
\newcommand{\contour}{\gamma}

\title{From the Airy line ensemble to the Gaussian free field}
\date{\today}
\author{Evgeni Dimitrov} 
\author{Alex Fu}
\author{Zhengye Zhou}

\begin{document}
\begin{abstract}
We study the global fluctuations of the height function associated with the Airy line ensemble. Using its determinantal structure and a steepest-descent analysis of the extended Airy kernel, we prove that, after a suitable rescaling, the centered height function converges to an explicit pullback of the Gaussian free field. The convergence holds in the sense of joint moments of linear statistics against compactly supported continuous test functions.
\end{abstract}
\maketitle

%
%
\section{Introduction and main results}

The Airy line ensemble is a fundamental object in the Kardar--Parisi--Zhang (KPZ) universality class \cite{Corwin12,CH14}. It arises as the edge scaling limit of a wide range of models, including Wigner matrices \cite{Sod15}, lozenge tilings \cite{AH25}, avoiding Brownian bridges (also known as Brownian watermelons) \cite{CH14}, and various integrable models of non-intersecting random walks and last passage percolation \cite{DNV19,DZ26,Zhou26}. As such, it provides a canonical description of edge fluctuations in KPZ-type systems, making the study of its global behavior a natural problem.

At any fixed time, the ensemble restricts to the Airy point process. A fundamental result of Soshnikov \cite{Soshnikov99} established that suitably normalized counting statistics of this point process converge to Gaussian random variables. This raises the question of whether there is a corresponding Gaussian description of the full space-time fluctuations of the ensemble.

A natural object for addressing this question is the associated height function. Rather than focusing on a single counting statistic, one may instead study the random field it generates and investigate its large-scale behavior. The main result of this paper is that, after a suitable rescaling, the fluctuations of this field converge to an explicit pullback of the Gaussian free field (GFF) with Dirichlet boundary conditions. In this sense, our theorem may be viewed as a field-level extension of Soshnikov's central limit theorem, describing the macroscopic fluctuations of the Airy line ensemble.

The appearance of the GFF is consistent with the broader picture that has emerged in the study of random tilings and determinantal systems. In many models whose edge behavior is governed by the Airy line ensemble, the corresponding height functions are known to exhibit GFF fluctuations in the bulk. For lozenge tilings, we refer to \cite{P1,P2} and the book \cite{GorBook}; for domino tilings, see \cite{K01,J05}. From this perspective, it is natural to expect that the Airy line ensemble possesses an underlying Gaussian fluctuation field. Theorem~\ref{Thm.MomentConvergence} confirms this expectation by identifying a pullback of the GFF as the scaling limit of its associated height function.

\medskip

We briefly introduce the Airy line ensemble here and refer the reader to Section \ref{Sec.Preliminary} for further background. The Airy line ensemble is the unique sequence of random continuous functions $\mathcal A=\{\mathcal A_i\}_{i\ge 1}$ satisfying $\mathcal A_i(t)>\mathcal A_{i+1}(t)$ for all \(i\ge 1\) and \(t\in\mathbb R\), whose finite-dimensional distributions are governed by the extended Airy kernel (\ref{Eq.S1AiryKer}). The associated {\em height function} is defined by
\begin{equation}\label{eq: height}
    h(t, x) = \#\{i \geq 1 : \calA_i(t) \geq x\},
\end{equation}
which records the number of Airy lines lying above the level \(x\) at time \(t\). We also let
\[
\mathcal{D}=\{(t,x)\in\mathbb{R}^2:x<0\}
\]
denote the lower half-plane.

With this notation in place, we can state our main result.

\begin{theorem}\label{Thm.MomentConvergence}
    For all $T > 0$, define on $\R^2$ the random functions
    \begin{equation}
        \label{Eq.HeightFunction}
        \begin{aligned}
            h_T(t, x) = h(T^{1/2} t, Tx),\quad
            H_T(t, x) = h_T(t, x) - \E[h_T(t, x)].
        \end{aligned}
    \end{equation}
    For all compactly supported continuous functions $\varphi \in C_c(\R^2)$, define the pairing of $H_T$ and $\varphi$ to be the random variable
    \begin{equation}\label{Eq.Pairing}
        \ang{H_T, \varphi}
        = \int_{\R^2} dt dx H_T(t, x) \varphi(t, x).
    \end{equation}
    Then, for all $m \geq 1$, all $\varphi_1, \ldots, \varphi_m \in C_c(\R^2)$, and all $T_n > 0$ satisfying $T_n \uparrow \infty$, it holds that
    \begin{equation*}
        (\ang{\sqrt{\pi} H_{T_n}, \varphi_1}, \ldots, \ang{\sqrt{\pi} H_{T_n}, \varphi_m})
        \xrightarrow{n \to \infty} (\xi_1, \ldots, \xi_m),
    \end{equation*}
    where the convergence of random vectors is in the sense of joint moments, and $(\xi_1, \ldots, \xi_m)$ is a mean-zero Gaussian random vector with covariance matrix
    \begin{equation}\label{Eq.S1Covariance}
        \E[\xi_j \xi_k]
        =-\frac{1}{2 \pi} \int_{\mathcal{D}} dt_j dx_j \int_{\mathcal{D}} dt_k dx_k \log \left|\frac{t_j + \im \sqrt{-x_j}-t_k - \im \sqrt{-x_k}}{t_j + \im \sqrt{-x_j}-t_k +\im \sqrt{-x_k}}\right| \varphi_j(t_j, x_j) \varphi_k(t_k, x_k).
    \end{equation}
\end{theorem}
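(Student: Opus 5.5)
We will prove the result by the method of moments, via joint cumulants. Set $\ang{\sqrt\pi H_T,\varphi} = \sqrt\pi\int dt\, dx\, \varphi(t,x)\,(h_T - \E h_T)$. Since $\varphi$ is compactly supported and continuous, it is a uniform limit of finite linear combinations of indicators. We approximate the time integral by Riemann sums in $t$, so it suffices to handle linear statistics at finitely many times. Concretely, we write
\[
\int dx\, \varphi(t,x)\, h_T(t,x)=\sum_{i}\Phi_t(\calA_i(T^{1/2}t)/T), \qquad \Phi_t(y)=\int_{-\infty}^{y}\varphi(t,x)\,dx .
\]
So $\ang{H_T,\varphi}$ is a centered multi-time linear statistic of the determinantal point process on $\R\times\R$ given by the extended Airy kernel $K^{\Airy}$ from (\ref{Eq.S1AiryKer}). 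Note that $\Phi_t$ is constant (equal to $\int\varphi$) for large $y$, so it is not decaying there. Centering still makes sense, because $h$ has finite moments: the upper tail of the Airy process is light. Convergence to a Gaussian in moments then reduces to two facts. First, the second joint cumulant converges to (\ref{Eq.S1Covariance}). Second, every joint cumulant of order $m\ge 3$ tends to $0$.

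Joint cumulants of determinantal linear statistics have the standard expression as a sum over cyclic permutations:
\[
\kappa_m=\sum \pm\int \prod_{j} f_j(t_j,y_j)\,K(t_1,y_1;t_2,y_2)\cdots K(t_m,y_m;t_1,y_1).
\]
Here the $f_j$ are derivative-type differences of the test functions. We will use the height-function form, in which the integrand involves $\varphi$ and integrated kernels $\int_{x}^{\infty}K$. The first real step is to write the extended Airy kernel in its double contour integral form,
\[
\frac{1}{(2\pi\im)^2}\int\int \frac{e^{w^3/3 - x w - \dots}}{e^{z^3/3 - yz-\dots}}\frac{dz\,dw}{z-w},
\]
plus the heat-kernel term for $t_1<t_2$. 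We then rescale $x=Tx'$, $t=T^{1/2}t'$ and substitute $z=T^{1/2}\zeta$. The exponent becomes $T^{3/2}S(\zeta;t',x')$. For $(t',x')\in\mathcal D$, the function $S$ has a pair of complex conjugate critical points, which we expect to be $\crit=t'+\im\sqrt{-x'}$ up to sign and convention. For $x'>0$ the critical points are real, and the height function there is deterministic to leading order: it is $0$ above the edge.

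The second step is a steepest-descent analysis. We deform the contours through $\crit,\bar\crit$ and verify the standard picture. The "diagonal" residue contributes the mean. In each cumulant, after integrating the height-function pairings in $x$, the leading term of order $m$ localizes to $\prod_j d\crit_j$-type integrals of the form
\[
\oint\cdots\oint \prod \frac{dz_j}{(z_j-z_{j+1})}.
\]
Only the cycle structure of these integrals survives. This is the Kenyon / Petrov / Borodin--Ferrari mechanism. It yields the covariance
\[
-\frac{1}{2\pi^2}\log\left|\frac{\crit_j-\crit_k}{\crit_j-\bar\crit_k}\right|,
\]
which is the Green's function of the upper half-plane pulled back by $(t,x)\mapsto\crit$. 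The factor $\sqrt\pi$ in the statement fixes the normalization to match (\ref{Eq.S1Covariance}). Cumulants of order $m\ge3$ vanish because the cyclic contour integrals cancel when summed over cyclic orders; this is the usual argument that "the third and higher cumulants of a sum over contours of $1/(z_j-z_{j+1})$ vanish".

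The main obstacle is uniformity. The test functions have support reaching into $x'>0$, near the edge $x'=0$, and near coinciding points $(t_j,x_j)=(t_k,x_k)$. At these places the critical points merge or the double-contour integrand becomes singular. We must show the following. Above the edge, the kernel decays superexponentially, using Airy asymptotics. Near $x'=0$ and near the diagonal, the contribution is integrable and negligible; the logarithmic singularity is integrable against continuous $\varphi$. To handle this, we would use crude bounds for the rescaled kernel that are uniform in $T$ and valid on compact sets, of the form $|K_T|\lesssim |\crit_1-\crit_2|^{-1}$ plus exponentially small terms. Dominated convergence then applies on a set of small measure near the singular loci. The bulk convergence itself is the comparatively routine saddle-point computation.
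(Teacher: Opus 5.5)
Your bulk picture matches the paper's. You rescale to the phase $S(\zeta;t,x)$ and deform through $\crit,\bar{\crit}$. The leading terms are integrals of $\prod_j (z_j-z_{j+1})^{-1}$ over the segments $[\bar{\crit}_j,\crit_j]$. Cycles of length at least three cancel by Kenyon's lemma, and the two-cycles give the logarithmic covariance.

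\textbf{The gap.} The gap is the control of everything outside the bulk: nearly coincident times, the strip $x\approx 0$, and the region just above the edge. You propose to dominate the integrand there by a uniform-in-$T$ bound $|K_T|\lesssim|\crit_1-\crit_2|^{-1}$. This cannot work as stated, for two reasons.

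First, the saddle-point analysis does not produce such a bound in these regions. The contour estimates carry factors $T^{-3/4}(-x)^{-1/4}$, $\eta^{-1}$ and $e^{-T^{3/2}\sqrt{-x}\,\eta^{2}}$, with $\eta$ comparable to the smallest time gap. They degenerate as $x\to 0$, where $\crit$ and $\bar{\crit}$ merge, and as time gaps shrink; at equal times the steep-descent contours for $z_j$ and $w_j$ cross. Near the edge the bound is in fact false: at $x_1=x_2=0$ and fixed $t_1>t_2$, (\ref{Eq.altAiryKer}) gives $T K^{\mathrm{Ext.Airy}}(T^{1/2}t_1,0;T^{1/2}t_2,0)\approx T^{1/2}\Ai(0)^2/(t_1-t_2)$.

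Second, even in the bulk such a bound gives no integrable majorant. For a two-cycle it bounds the integrand by $|\crit_1-\crit_2|^{-2}$, which is not integrable near the diagonal of $\mathcal{D}\times\mathcal{D}$. In the height-function form, integrating absolute values in $x_1,x_2$ gives a bound of order $|t_1-t_2|^{-1}$, which is again not integrable. The logarithmic covariance (which on the diagonal really grows like $\log T$) comes from cancellation in $\int\int dz_1\,dz_2\,(z_1-z_2)^{-2}$, and absolute-value bounds destroy it. Your Riemann-sum reduction in $t$ has the same problem: bounding its error uniformly in $T$ needs the same uniform covariance control, and it introduces equal-time factors.

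\textbf{The missing idea.} No uniform majorant for the prelimit is needed. The paper splits the support with $T$-dependent cutoffs and controls the bad pieces by H\"older's inequality together with a priori moment bounds for the Airy counting function.
\begin{itemize}
\item From Soshnikov's results (mean $\frac{2}{3\pi}L^{3/2}+O(1)$, variance $O(\log L)$, all cumulants $O(\mathrm{Var})$), one gets $\E|H_T(t,x)|^m\lesssim(\log T)^{m/2}$ for $x\in[-A,-T^{-\e}]$, and $\E[h_T(t,-T^{-\e})^m]\lesssim T^{3m(1-\e)/2}$.
\item Airy asymptotics and Hadamard's inequality give super-exponentially small moments for $x\geq T^{-\e}$.
\item The strip $|x|<T^{-\e}$ has measure $O(T^{-\e})$, which beats $T^{3(1-\e)/2}$ once $\e>3/5$.
\item The set $\{\min_{i\neq j}|t_i-t_j|\le T^{-\delta}\}$ has measure $O(T^{-\delta})$, which beats $(\log T)^{m/2}$.
\item The region $x\geq T^{-\e}$ is killed by $e^{-\frac{4}{3m}T^{3(1-\e)/2}}$ against the polynomial growth of the other factors.
\end{itemize}
Steepest descent is then needed only uniformly on the good set $x_j\le -T^{-\e}$, $\min_{i\neq j}|t_i-t_j|>T^{-\delta}$. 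There, the choice $\eta=\min_{i\neq j}|t_i-t_j|/8$ gives $T^{3/2}\sqrt{-x}\,\eta^2\geq T^{3/2-\e/2-2\delta}/64\to\infty$. The polynomial losses in $\eta^{-1}$ are beaten by the $T^{-3/4}$ gains from each saddle point when $\delta<1/(2m)$. Dominated convergence is applied only to the limit $P$, a sum of products of logarithms, which is integrable.

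Without these moment bounds, or a genuine substitute such as uniform estimates on the integrated cycles down to the Airy scale, your argument does not close.
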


\begin{remark}\label{Rem.Thm1}
Lemma \ref{Lem.PairRandomVar} shows that, for every $\varphi \in C_c(\R^2)$, the integral in (\ref{Eq.Pairing}) is almost surely finite and defines a random variable with finite moments. Consequently, $(\ang{\sqrt{\pi} H_{T_n}, \varphi_1}, \ldots, \ang{\sqrt{\pi} H_{T_n}, \varphi_m})$ is a well-defined sequence of random vectors in $\mathbb{R}^m$.
\end{remark}

\begin{remark} In Section~\ref{Section6.2}, we identify the limiting Gaussian random vector $(\xi_1,\ldots,\xi_m)$ with $(\ang{\GFFpullback,\varphi_1},\ldots,\ang{\GFFpullback,\varphi_m})$, where $\GFFpullback$ denotes the $\crit$-pullback of the GFF. The diffeomorphism $\crit$ and the Gaussian field $\GFFpullback$ are introduced in that section.
\end{remark}

The proof of Theorem~\ref{Thm.MomentConvergence} is carried out in Section~\ref{Section6.1}. The starting point is a decomposition of the relevant joint moment into a finite sum of integrals in the variables $t_1,x_1,\ldots,t_m,x_m$. All but one of these integrals are taken over a region in which some variable $x_j$ satisfies $x_j\ge -T_n^{-\e}$. The moment bounds for the counting function of the Airy point process established in Section~\ref{Sec.MomentAsymptotics} imply that these contributions vanish as $T_n\uparrow \infty$. The remaining term involves an integral over the region $x_1,\ldots,x_m\le -T_n^{-\e}$, to which the convergence results of Section~\ref{Sec.CorrelationFunctionConvergence}, obtained through a steepest-descent analysis of the extended Airy kernel, apply directly. These results imply that the term converges to the corresponding joint moment of $(\xi_1,\ldots,\xi_m)$.

From a technical perspective, our approach is closest to those of \cite{BF14, P1}. A key distinction is that the models studied in those works involve finite, discrete height functions, whereas our setting is continuous. As a consequence, a significant part of our analysis is devoted to addressing measurability and integrability issues that arise when defining the pairings in Theorem~\ref{Thm.MomentConvergence} and establishing the existence of their moments. On the other hand, the correlation kernel in our setting admits a considerably more explicit representation than the kernels considered in \cite{BF14, P1}. This additional structure allows us to make several aspects of the asymptotic analysis fully explicit and to provide a detailed treatment of the relevant estimates.

\medskip

We end this section with a brief outline of the paper. Section~\ref{Sec.Preliminary} introduces the Airy line ensemble, establishes the basic determinantal formulas used throughout the paper, and verifies that the pairings appearing in Theorem~\ref{Thm.MomentConvergence} are well defined. Section~\ref{Sec.MomentAsymptotics} derives bounds for the moments and central moments of the number of points in the upper and lower tails of the Airy point process by combining the results of \cite{Soshnikov99} with asymptotic estimates for the Airy function. Section~\ref{sec. estimates} develops the analytic estimates needed for the asymptotic analysis of the extended Airy kernel, including a study of the critical points of the phase function and the associated steep-descent contours. Section~\ref{Sec.CorrelationFunctionConvergence} proves convergence of joint moments of $H_T$ through a careful steepest-descent analysis based on the estimates from Section~\ref{sec. estimates}. Finally, Section~\ref{sec: proof} completes the proof of Theorem~\ref{Thm.MomentConvergence} and explains its connection to the GFF.

%
%
\section{Definitions and preliminary results}\label{Sec.Preliminary} This section collects the definitions and preliminary results required to make precise the statement of Theorem~\ref{Thm.MomentConvergence} and to carry out its proof. We begin by recalling the Airy line ensemble and several of its well-known properties. We then introduce the height observables that appear throughout the paper and establish that their pairings against compactly supported continuous test functions are well defined and possess finite moments of all orders.

%
%
\subsection{The Airy line ensemble} \label{Section2.1} Throughout this section, we freely use terminology pertaining to determinantal point processes, following \cite[Section 2]{Dimitrov26}. We first fix notation and recall the definition of the Airy line ensemble.

\begin{definition}\label{Def.Measures} For a finite set $\mathsf{S} = \{s_1, \ldots, s_m\} \subset \mathbb{R}$, we let $\mu_{\mathsf{S}}$ denote the counting measure on $\mathbb{R}$, defined by $\mu_{\mathsf{S}}(A) = |A \cap \mathsf{S}|$. We also let $\mathrm{Leb}$ denote the usual Lebesgue measure on $\mathbb{R}$ and $\mu_{\mathsf{S}} \times \mathrm{Leb}$ the product measure on $\mathbb{R}^2$.
\end{definition}

\begin{definition}\label{Def.SpaceC} Let $C(\mathbb{N} \times \mathbb{R})$ be the space of continuous functions $f:\mathbb{N} \times \mathbb{R} \rightarrow \mathbb{R}$ with the topology of uniform convergence over compact sets. Here, $\mathbb{N}$ has the discrete topology, $\mathbb{R}$ has the usual topology, and $\mathbb{N} \times \mathbb{R}$ is endowed with the product topology. As shown in \cite[Lemma 2.2]{DEA21}, $C(\mathbb{N} \times \mathbb{R})$ is a Polish space.
\end{definition}

\begin{definition}\label{Def.LE} A {\em line ensemble} $\mathcal{L} = \left(\mathcal{L}(i,t): i \geq 1, t \in \mathbb{R}\right) = \{\mathcal{L}_i\}_{i \geq 1}$ is a random element in $C(\mathbb{N} \times \mathbb{R})$ in the sense of \cite[Section 3]{Billing}. We say that $\mathcal{L}$ is {\em non-intersecting} if almost surely $\mathcal{L}_i(t) > \mathcal{L}_j(t)$ for all $t \in \mathbb{R}$ and $1 \leq i < j$.
\end{definition}

\begin{definition}\label{Def.ExtAiryKer}
For $x_1, x_2, t_1, t_2 \in \mathbb{R}$, we define the {\em extended Airy kernel} by 
\begin{equation}\label{Eq.S1AiryKer}
\begin{split}
K^{\mathrm{Ext.Airy}}(t_1,x_1; t_2,x_2) = \frac{1}{(2\pi \im)^2} \int_{\Im z=\eta} d z \int_{\Im w=\eta'} dw \frac{e^{\im \left(z^3/3 +x_1z + w^3/3 + x_2w\right)}}{\im (z+w) + t_2 - t_1}.
\end{split}
\end{equation}
In (\ref{Eq.S1AiryKer}), $\eta,\eta'>0$ are arbitrary, subject to the condition that $\eta+\eta'+t_1-t_2<0$ whenever $t_2>t_1$.
\end{definition}
\begin{remark}\label{Rem.AiryLE1}
The introduction of the extended Airy kernel is often attributed to \cite{PS02}, where it arises in the context of the polynuclear growth model, although it appeared earlier in \cite{FNH99} and \cite{Mac94}. There are various formulas for the extended Airy kernel, and the one in (\ref{Eq.S1AiryKer}) comes from \cite[Proposition~2.3]{Kurt03}. For later use, we recall the following formula from \cite{PS02}:
 \begin{equation}\label{Eq.altAiryKer}
K^{\mathrm{Ext.Airy}}(t_1,x_1;t_2,x_2)
=
\begin{cases}
\displaystyle
\int_0^\infty
e^{-\lambda (t_1-t_2)}
\Ai(x_1+\lambda)\Ai(x_2+\lambda)\,d\lambda,
& t_1 \ge t_2,
\\[1em]
\displaystyle
-\int_{-\infty}^0
e^{-\lambda (t_1-t_2)}
\Ai(x_1+\lambda)\Ai(x_2+\lambda)\,d\lambda,
& t_1 < t_2,
\end{cases}
\end{equation}
where $\Ai$ is the Airy function.
\end{remark}

With the above notation in place, we can give our formal definition of the Airy line ensemble.
\begin{definition}\label{Def.AiryLE} The {\em Airy line ensemble} $\mathcal{A} = \{\mathcal{A}_i\}_{i \geq 1}$ is a non-intersecting line ensemble in the sense of Definition \ref{Def.LE}. Its law is uniquely characterized by the following property. For any finite set $\mathsf{S} = \{s_1, \ldots, s_m\} \subset \mathbb{R}$ with $s_1 < \cdots < s_m$, the random measure on $\mathbb{R}^2$ given by
\begin{equation}\label{Eq.RMS1}
M^{\mathsf{S}; \mathcal{A}}(A) = \sum_{i \geq 1} \sum_{j = 1}^m {\bf 1} \left\{\left(s_j, \mathcal{A}_{i}(s_j) \right) \in A \right\},
\end{equation}
is a determinantal point process with reference measure $\mu_{\mathsf{S}} \times \mathrm{Leb}$ as in Definition \ref{Def.Measures}, and with correlation kernel given by $K^{\mathrm{Ext.Airy}}$ as in Definition \ref{Def.ExtAiryKer}.
\end{definition}
\begin{remark}\label{Rem.AiryLE2}  The {\em existence} of a line ensemble satisfying the conditions of Definition \ref{Def.AiryLE} was established in \cite{CH14}, while the fact that these conditions {\em uniquely} determine the law of $\mathcal{A}$ is well known and follows, for example, from \cite[Proposition 2.13(3)]{Dimitrov26}, \cite[Corollary 2.20]{Dimitrov26}, and \cite[Lemma 3.1]{DimMat}.
\end{remark}

From either (\ref{Eq.S1AiryKer}) or (\ref{Eq.altAiryKer}), it is clear that $K^{\mathrm{Ext.Airy}}$ depends on $t_1$ and $t_2$ only through $t_1 - t_2$, and so $\mathcal{A}$ is {\em stationary}, i.e. $\left(\mathcal{A}_i(t): i \geq 1, t \in \mathbb{R}\right) \overset{d}{=} \left(\mathcal{A}_{i}(s + t): i \geq 1, t \in \mathbb{R}\right)$ for all $s \in \mathbb{R}$.

We end this section with the following result, which shows that the measure in (\ref{Eq.RMS1}) is a determinantal point process on $\mathbb{R}^2$ with a kernel $\widetilde{K}$ that differs from $K^{\mathrm{Ext.Airy}}$ by a gauge transformation. We introduce this alternative kernel as it is more suitable for asymptotic analysis later in the paper.
\begin{lemma}\label{lemma: Airykernel}
   The random measure $M^{\mathsf{S}; \mathcal{A}}$ in \eqref{Eq.RMS1} is a determinantal point process with reference measure $\mu_{\mathsf{S}} \times \mathrm{Leb}$ as in Definition \ref{Def.Measures}, and with correlation kernel given by 
    \begin{equation}
        \label{Eq.ExtendedAiryKernel}
        \widetilde{K}(t_1, x_1; t_2, x_2) = \frac{1}{(2\pi\im)^2} \int_{\eta + t_1 + \im \mathbb{R}} \diff z \int_{-\eta' + t_2 + \im \mathbb{R} } \diff w \frac{e^{z^3/3 - z^2 t_1 + z(t_1^2 - x_1) - w^3/3 + w^2 t_2 - w(t_2^2 - x_2)}}{z - w}.
    \end{equation}
    In (\ref{Eq.ExtendedAiryKernel}), $\eta,\eta'>0$ are arbitrary, subject to the condition that $\eta+\eta'+t_1-t_2<0$ whenever $t_2>t_1$, and both contours are oriented in the direction of increasing imaginary part.
\end{lemma}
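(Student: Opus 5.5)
The plan is to show that $\widetilde{K}$ is a conjugation of $K^{\mathrm{Ext.Airy}}$,
\[
\widetilde{K}(t_1,x_1;t_2,x_2) = \frac{g(t_2,x_2)}{g(t_1,x_1)}\, K^{\mathrm{Ext.Airy}}(t_1,x_1;t_2,x_2), \qquad g(t,x) = e^{-t^3/3 + xt},
\]
for some nonvanishing function $g$ on $\mathbb{R}^2$. Such a conjugation leaves every determinant $\det[K(t_i,x_i;t_j,x_j)]_{i,j=1}^n$ unchanged, because the factors $g(t_j,x_j)$ and $g(t_i,x_i)^{-1}$ cancel in each term of the Leibniz expansion. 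Hence all correlation functions of $M^{\mathsf{S};\mathcal{A}}$ with respect to $\mu_{\mathsf{S}}\times\mathrm{Leb}$ are the same for both kernels. Definition \ref{Def.AiryLE} then gives the lemma directly.

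To get the identity, I will substitute into (\ref{Eq.S1AiryKer}) with the same $\eta,\eta'$. Put $Z = \im(z - t_1)$ and $W = -\im(w - t_2)$.
\begin{itemize}
\item Set $u = z - t_1$, so $Z = \im u$ and $Z^3 = -\im u^3$. Then $\im(Z^3/3 + x_1 Z) = u^3/3 - x_1 u$.
\item Set $v = w - t_2$, so $W = -\im v$. Then $\im(W^3/3 + x_2 W) = -v^3/3 + x_2 v$.
\end{itemize}
Expanding in $z$ and $w$ gives
\[
u^3/3 - x_1 u = z^3/3 - z^2 t_1 + z(t_1^2 - x_1) - t_1^3/3 + x_1 t_1,
\]
\[
-v^3/3 + x_2 v = -w^3/3 + w^2 t_2 - w(t_2^2 - x_2) + t_2^3/3 - x_2 t_2.
\]
These are exactly the exponent in (\ref{Eq.ExtendedAiryKernel}) plus the $z,w$-independent term $\log g(t_1,x_1) - \log g(t_2,x_2)$. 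The denominator transforms as
\[
\im(Z+W) + t_2 - t_1 = -u + v + t_2 - t_1 = -(z-w).
\]

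Next I track the contours, Jacobians and orientations.
\begin{itemize}
\item For $Z = s + \im\eta$ with $s$ increasing, we get $z = t_1 + \eta - \im s$. This lies on $\eta + t_1 + \im\mathbb{R}$ but traversed with decreasing imaginary part. Also $dZ = \im\, dz$.
\item For $W = s + \im\eta'$, we get $w = t_2 - \eta' + \im s$. This lies on $-\eta' + t_2 + \im\mathbb{R}$ with increasing imaginary part. Also $dW = -\im\, dw$.
\end{itemize}
The Jacobian product is $\im\cdot(-\im) = 1$. Reversing the $z$-orientation contributes a factor $-1$, and this cancels the $-1$ from the denominator. This yields the displayed conjugation identity. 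The admissibility condition on $\eta,\eta'$ is carried over verbatim. It is precisely the condition that the two new contours are separated, i.e. $\Re(z - w) = \eta + \eta' + t_1 - t_2 \neq 0$, so $z - w$ never vanishes.

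The only delicate point is justifying the substitution and the factorization of constants in the double integral. Along the contours, $\Re(z^3) \sim -3(\eta + t_1)(\Im z)^2$ and $\Re(-w^3) \sim 3(t_2 - \eta')(\Im w)^2$ as the imaginary parts grow, so the naive Gaussian decay in these variables depends on $t_1, t_2$. I would therefore check absolute convergence directly on the original kernel (\ref{Eq.S1AiryKer}). There, $|e^{\im Z^3/3}| = e^{-\eta s^2 + \eta^3/3}$ for $Z = s + \im\eta$, and similarly for $W$. The denominator is bounded below in modulus, by $|t_2 - t_1 - \eta - \eta'|$ when $t_2 > t_1$ and by $\eta + \eta' + t_1 - t_2$ otherwise. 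So the double integral converges absolutely, and Fubini together with the affine change of variables is legitimate. The transformed integral therefore converges absolutely and equals $\widetilde{K}$ times the stated factor.
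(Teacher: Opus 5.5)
Your proposal is correct and is essentially the paper's proof: the same affine substitutions $z = t_1 - \im Z$, $w = t_2 + \im W$, together with reversing the $z$-orientation, turn \eqref{Eq.S1AiryKer} into $\widetilde{K}$ up to a gauge factor, and the invariance of the determinants under this conjugation finishes the argument. Your direction $\widetilde{K} = \frac{g(t_2,x_2)}{g(t_1,x_1)} K^{\mathrm{Ext.Airy}}$ is in fact the right one (the paper's displayed ratio is inverted, which is harmless for a gauge transformation), and your absolute-convergence check on the original kernel supplies a detail the paper leaves implicit.
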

\begin{proof} We start from (\ref{Eq.S1AiryKer}) and perform the change of variables
$z \mapsto -\im z + t_1$ and $w \mapsto \im w + t_2$. Reversing the orientation of the resulting $z$-contour, we get
\begin{equation*}
\widetilde{K}(t_1,x_1;t_2,x_2)
    = \frac{e^{-t_1^3/3+x_1t_1}}
         {e^{-t_2^3/3+x_2t_2}}
    K^{\mathrm{Ext.Airy}}(t_1,x_1;t_2,x_2).
\end{equation*}
As the two kernels are related by a gauge transformation, the lemma follows; see \cite[Proposition~2.13(4)]{Dimitrov26} with
$f(t,x)=e^{-t^3/3+xt}$.
\end{proof}

%
%
\subsection{Height observables}\label{Section2.2} For each $t\in \mathbb{R}$, define the random measure on $\mathbb{R}$ by 
\begin{equation}\label{Eq.AiryPP}
M^{t;\mathcal{A}}(A) = \sum_{i \geq 1}  {\bf 1} \left\{\mathcal{A}_{i}(t) \in A \right\}.
\end{equation}
By Definition \ref{Def.AiryLE} and \cite[Lemma 2.17]{Dimitrov26}, the random measure $M^{t;\mathcal{A}}$ is a determinantal point process with correlation kernel $K^{\mathrm{Airy}}(x_1; x_2) := K^{\mathrm{Ext.Airy}}(t,x_1; t,x_2)$ and reference measure $\mathrm{Leb}$. In particular, $K^{\mathrm{Airy}}$ is the classical {\em Airy kernel}, and the law of $M^{t;\mathcal{A}}$ coincides with that of the {\em Airy point process}. 

Recalling from (\ref{eq: height}) that $h(t, x) = \#\{i \geq 1 : \calA_i(t) \geq x\}$, we can express the height function as
\begin{equation}\label{Eq.HeightToMeasure}
h(t,x) = M^{t; \mathcal{A}}([x, \infty)) = M^{\mathsf{S}; \mathcal{A}}(\{t\} \times [x, \infty)),
\end{equation}
where $\mathsf{S} \subset \mathbb{R}$ is any set that contains $t$, and $M^{\mathsf{S}; \mathcal{A}}$ is as in (\ref{Eq.RMS1}). In particular, $h(t,x)$ is an extended random variable taking values in $\mathbb{Z}_{\geq 0} \cup \{\infty\}$. 

Since $M^{t;\mathcal{A}}$ has the law of the Airy point process, $h(t,x)$ has the same distribution as the number of Airy particles contained in the interval $[x, \infty)$. By \cite[Section 3]{Soshnikov99}, all moments of this counting function are finite. Consequently, $h(t,x) \in \mathbb{Z}_{\geq 0}$ almost surely. Moreover, for every $m \in \mathbb{N}$ and $x \in \mathbb{R}$, there is a constant $C(m,x) \in (0, \infty)$ such that for all $t \in \mathbb{R}$ 
\begin{equation}\label{Eq.MomentBoundS2}
\mathbb{E}\left[h(t,x)^m \right] \leq C(m,x).
\end{equation}

The discussion in the previous paragraph shows that for each $T > 0$ and $t,x \in \mathbb{R}$ the scaled height functions $h_T(t, x)$ and $H_{T}(t,x)$ from (\ref{Eq.HeightFunction}) are random variables that have bounded moments. The next lemma establishes the same for the pairings $\ang{H_T, \varphi}$ in Theorem \ref{Thm.MomentConvergence}.
\begin{lemma}\label{Lem.PairRandomVar}
Fix $T > 0$ and $\varphi \in C_c(\R^2)$. Then $\ang{H_T, \varphi}$ is a random variable with finite moments of all orders.
\end{lemma}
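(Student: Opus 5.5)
We need three things: joint measurability of $(\omega,t,x)\mapsto h_T(t,x)$, so that the pairing is a random variable; almost sure absolute integrability of $H_T\varphi$; and finiteness of all moments. We handle them in that order.

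\textbf{Measurability.} First we show that $(\omega,t,x)\mapsto h(t,x)$ is jointly measurable on $\Omega\times\R^2$. Write
\[
h(t,x)=\sum_{i\ge1}\mathbf 1\{\mathcal A_i(t)\ge x\}.
\]
Each map $(\omega,t,x)\mapsto \mathcal A_i(t)-x$ is measurable in $\omega$ and continuous in $(t,x)$, so it is a Carath\'eodory function and hence jointly measurable. Each indicator is therefore jointly measurable, and so is the countable sum, with values in $[0,\infty]$.

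Next, the map $(t,x)\mapsto \E[h_T(t,x)]$ is Borel. By Tonelli this follows from the joint measurability just shown. By stationarity it depends only on $x$.

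Then $H_T\varphi$ is jointly measurable on the set where it is defined. By Fubini, $\ang{H_T,\varphi}$ is a random variable as soon as $\int|H_T\varphi|\,dt\,dx<\infty$ almost surely.

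\textbf{Integrability and moments.} Let $\operatorname{supp}\varphi\subset[-R,R]^2$ and $\|\varphi\|_\infty\le C$. Since $h(t,x)$ is nonincreasing in $x$, on the support we have
\[
|H_T(t,x)|\le h_T(t,-R)+\E[h_T(t,-R)].
\]
By \eqref{Eq.MomentBoundS2} and stationarity, $\E[h_T(t,-R)]=\E[h(0,-RT)]\le C(1,-RT)$ uniformly in $t$. Hence
\[
|\ang{H_T,\varphi}|\le C\int_{-R}^R dt\int_{-R}^R dx\,\big(h_T(t,-R)+C(1,-RT)\big)=2RC\int_{-R}^R h_T(t,-R)\,dt+4R^2C\,C(1,-RT).
\]
Let $Y=\int_{-R}^R h_T(t,-R)\,dt$. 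It suffices to show $\E[Y^m]<\infty$ for every $m$; this also gives $Y<\infty$ almost surely, hence absolute integrability, and the moment bound for $\ang{H_T,\varphi}$ follows by Minkowski. By H\"older (Jensen for the normalized measure on $[-R,R]$),
\[
Y^m\le (2R)^{m-1}\int_{-R}^R h_T(t,-R)^m\,dt .
\]
Tonelli then gives
\[
\E[Y^m]\le (2R)^m\sup_t \E[h(t,-RT)^m]\le (2R)^m C(m,-RT)<\infty
\]
by \eqref{Eq.MomentBoundS2}.

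\textbf{Main obstacle.} The delicate part is the measurability and the use of Tonelli on an extended-valued integrand. Finiteness of $h$ at each fixed $(t,x)$ does not immediately give almost sure finiteness simultaneously for all $t$. We avoid needing that: we work with $[0,\infty]$-valued functions throughout, and the moment bound on $Y$ shows that the set of $t$ with $h_T(t,-R)=\infty$ has Lebesgue measure zero almost surely. The integral is therefore well defined, with $H_T$ defined for almost every $(t,x)$. Everything else is a direct consequence of \eqref{Eq.MomentBoundS2} and monotonicity in $x$.
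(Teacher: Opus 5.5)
Your proof is correct. The moment estimate is essentially the paper's: monotonicity of $h_T$ in $x$, Jensen/H\"older on $[-R,R]$, Tonelli, and \eqref{Eq.MomentBoundS2}. The genuine difference is in how you get measurability and almost sure finiteness of the pairing.

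\textbf{The paper's route.} It invokes \cite[Theorem~1.6]{DV21}: almost surely $\sup_{t\in[-a,a]}\mathcal{A}_i(t)\to-\infty$ as $i\to\infty$. This confines $\mathcal{A}$ to a full-measure Borel set $C_{\mathrm{fin}}$ on which only finitely many curves enter $[-A,A]^2$. The paper then shows that $\mathcal{C}\mapsto\int \#\{i:\mathcal{C}_i(t)\ge x\}\varphi(t,x)\,dt\,dx$ is continuous on $C_{\mathrm{fin}}$. Hence the pairing is a measurable image of $\mathcal{A}$ and is finite for every sample path in $C_{\mathrm{fin}}$.

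\textbf{Your route.} You use the Carath\'eodory property of $(\omega,t,x)\mapsto\mathcal{A}_i(t)-x$ to get joint measurability of the $[0,\infty]$-valued height function. You then let the bound $\E[Y^m]<\infty$ itself produce almost sure finiteness. The price is an exceptional Lebesgue-null set of times where $h_T(t,-R)$ might be infinite, which is harmless for the integral.

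\textbf{Comparison.} Your argument needs no input beyond \eqref{Eq.MomentBoundS2}. It also makes explicit the joint measurability on $\Omega\times\R^2$ that the Tonelli/Fubini exchanges in both proofs require. The paper's route gives more structure: finiteness of $h(t,x)$ simultaneously for all $(t,x)$, and continuity of the pairing functional on path space. That is conceptually cleaner, though not needed later.
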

\begin{proof} Let $(\Omega, \mathcal{F}, \mathbb{P})$ be a probability space on which the Airy line ensemble $\mathcal{A} = \{\calA_i\}_{i \geq 1}$ is defined. From \cite[Theorem~1.6]{DV21}, we have
\begin{equation*}
\mathbb{P}\left(\lim_{i \to \infty} \sup_{t \in [-a, a]} \calA_i(t) = -\infty \textup{ for each } a > 0\right) = 1.
\end{equation*}
Consequently, $\mathbb{P}$-almost surely $\mathcal{A}$ takes values in the Borel set
\begin{equation*}
\begin{split}
C_\mathrm{fin} = & \bigcap_{A \in \mathbb{N}} \bigcup_{k \in \mathbb{N}} \left\{\mathcal{C} \in C(\mathbb{N} \times \R): \mathcal{C}_i(t) \geq \mathcal{C}_{i + 1}(t) \mbox{ for all } i \geq 1, t \in \R \mbox{, and}  \sup_{t \in [-A,A]} \mathcal{C}_k(t) < -A  \right\}.
\end{split}
\end{equation*}

We next show that the mapping
\begin{equation*}
\mathcal{C} \mapsto \int_{\R^2} dt dx (\#\{i \geq 1 : \calC_i(t) \geq x\}) \varphi(t, x),
\end{equation*}
is continuous on $C_\mathrm{fin}$. Indeed, suppose that $\mathcal{C}^{(n)} \in C_\mathrm{fin}$ for each $n \geq 1$, and suppose that $\lim_{n \rightarrow \infty} \mathcal{C}^{(n)}= \mathcal{C} \in C_\mathrm{fin}$ uniformly on compact sets. 

If $A$ is large so that $[-A,A]^2$ contains the support of $\varphi$, then we can find $K \in \mathbb{N}$ such that 
$$\sup_{t \in [-A,A]} \mathcal{C}^{(n)}_{K+1}(t) < - A \mbox{ for all $n \in \mathbb{N}$ and } \sup_{t \in [-A,A]} \mathcal{C}_{K+1} (t) < -A.$$
Using the orderedness of $\mathcal{C}_i^{(n)}$, the uniform convergence of $\mathcal{C}_i^{(n)}$ to $\mathcal{C}_i$ on $[-A,A]$ for each $i \in \mathbb{N}$, and the bounded convergence theorem, we conclude
\begin{equation*}
\begin{split}
\lim_{n \rightarrow \infty}\int_{\R^2} dt dx (\#\{i \geq 1 : \calC^{(n)}_i(t) \geq x\}) \varphi(t, x) = \lim_{n \to \infty} \sum_{i=1}^{K} \int_{[-A, A]^2} dt dx \1\{\mathcal{C}^{(n)}_i(t) \geq x\} \varphi(t, x) \\
= \sum_{i=1}^{K} \int_{[-A, A]^2} dt dx \1\{\mathcal{C}_i(t) \geq x\} \varphi(t, x) = \int_{\R^2} dt dx (\#\{i \geq 1 : \calC_i(t) \geq x\}) \varphi(t, x).
\end{split}
\end{equation*}
This proves our desired continuity.

The preceding argument shows that $\ang{h, \varphi}$ is a random variable, being the composition of the measurable map from $(\Omega, \mathcal{F}, \mathbb{P})$ to $C_\mathrm{fin}$ and a continuous map from $C_\mathrm{fin}$ to $\R$. 

Using $h_T(t, x) = h(T^{1/2} t, Tx)$, (\ref{Eq.MomentBoundS2}), and the monotonicity of $h_T(t,x)$ in $x$, we conclude 
$$\mathbb{E}\left[\int_{\R^2} dt dx \left|h_T(t, x) \varphi(t, x)\right|\right] \leq (2A)^2\|\varphi\|_{\infty}  \cdot C(1,-TA).$$ 
Since $\ang{h, \varphi}$ is a random variable for each $\varphi \in C_c(\R^2)$, the same is true for $\ang{h_T, \varphi}$ by scaling. The last inequality shows that $\ang{h_T, \varphi}$ is integrable, and moreover, by Fubini's theorem, it shows that 
\begin{equation}\label{Eq.Center}
\ang{H_T, \varphi} = \int_{\R^2} dt dx \left[h_T(t,x) - \mathbb{E}[h_T(t,x)] \right]\varphi(t, x) = \ang{h_T, \varphi} - \mathbb{E}[\ang{h_T, \varphi}].
\end{equation}
This proves that $\ang{H_T, \varphi}$ is a random variable.

Lastly, by Jensen's inequality, Fubini's theorem, (\ref{Eq.MomentBoundS2}), and the monotonicity of $h_T(t,x)$ in $x$, we have for each $m \geq 1$ that
\begin{equation*}
\begin{split}
&\mathbb{E}\left[|\ang{h_T, \varphi}|^m\right] \leq \mathbb{E}\left[ (2A)^{2m} \|\varphi\|_{\infty}^m \left(\frac{1}{4A^2} \int_{[-A,A]^2} dt dx h_T(t,x) \right)^m \right] \\
& \leq   \mathbb{E}\left[ (2A)^{2m} \|\varphi\|_{\infty}^m \cdot \frac{1}{4A^2} \int_{[-A,A]^2} dt dx h_T(t,x)^m \right] \leq (2A)^{2m}\|\varphi\|_{\infty}^m C(m, -TA).
\end{split}
\end{equation*}
The latter shows that $\ang{h_T, \varphi}$ has finite moments. From (\ref{Eq.Center}), we conclude the same for $\ang{H_T, \varphi}$.
\end{proof}

%
%
\section{Moment bounds}\label{Sec.MomentAsymptotics} In this section, we derive estimates for the moments of the scaled height functions $h_T(t,x)$ and $H_{T}(t,x)$ defined in (\ref{Eq.HeightFunction}). We treat separately the lower-tail regime $x < 0$ in Section \ref{Section3.1} and the upper-tail regime $x > 0$ in Section \ref{Section3.2}. Our arguments combine results from \cite{Soshnikov99} with a detailed analysis of the Airy kernel introduced in Section \ref{Section2.2}. Throughout this section, we use the notation introduced in Section \ref{Sec.Preliminary}.

%
%
\subsection{Lower-tail bounds}\label{Section3.1} As explained in Section~\ref{Section2.2}, $h(t,x)$ has the same distribution as the number of particles of the Airy point process contained in the interval $[x, \infty)$. The latter quantity was thoroughly analyzed in \cite{Soshnikov99} when $x \rightarrow -\infty$, and the results of that paper yield the moment bounds in the next two propositions.

\begin{proposition}\label{Prop.LowerTailCentralMoment}
Fix $m \geq 1$, $A \geq 1$ and $\e \in (0,1)$, and let $H_T$ be as in (\ref{Eq.HeightFunction}). There exists a constant $C_1(m,A,\e) > 0$ such that for all $T \geq e$ and $t \in \mathbb{R}$
\begin{equation}
\label{Eq.LowerTailCentralMoment}
\sup_{x \in [-A, -T^{-\e}]} \E\ockets{\Abs{H_T(t, x)}^m} \le C_1(m,A,\e) \cdot (\log T)^{m/2}.
\end{equation}
\end{proposition}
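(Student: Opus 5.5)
By stationarity and (\ref{Eq.HeightToMeasure}), $H_T(t,x)$ has the law of $N_y - \E[N_y]$, where $N_y$ is the number of points of the Airy point process in $[y,\infty)$ and $y = Tx \in [-AT, -T^{1-\e}]$. So the claim is a bound, uniform in $y$ in this window, on the central moments of the Airy counting function. The plan is to show that the cumulants of $N_y$ of every order are $O(\log |y|)$, with constants uniform over $|y|$ large. This suffices: $\log|y| \le \log(AT) \le C(A)\log T$ for $T\ge e$, and since $|y| \ge T^{1-\e} \ge e^{1-\e}$, $|y|$ is bounded below. The $m$-th central moment is a polynomial in the cumulants $\kappa_2,\dots,\kappa_m$. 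If every $\kappa_j$ is $O(\log|y|)$, the largest term is $\kappa_2^{m/2}$ for even $m$, giving $O((\log|y|)^{m/2})$. Odd $m$, and absolute rather than signed moments, then follow by Hölder/Jensen from the even case $2\lceil m/2\rceil$, using $\log T\ge 1$.

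\textbf{Variance.} Write $K=K^{\mathrm{Airy}}$ and $I_y = [y,\infty)$. The determinantal structure gives
\[
\mathrm{Var}(N_y) = \int_{I_y} K(u,u)\,du - \int_{I_y}\int_{I_y} K(u,v)^2\,du\,dv .
\]
The asymptotics in \cite{Soshnikov99} give $\mathrm{Var}(N_y) = \frac{3}{4\pi^2}\log|y|(1+o(1))$ as $y\to-\infty$. I will cite this, and use continuity in $y$ to cover any compact range of $y$, which yields $\mathrm{Var}(N_y)\le C\log|y|$ for all $|y|\ge e^{1-\e}$.

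\textbf{Higher cumulants.} For a determinantal process the cumulants of $N_y$ are
\[
\kappa_j = \sum (\pm)\,\mathrm{tr}\bigl(K_y^{\,l}\bigr), \qquad K_y = \chi_{I_y} K \chi_{I_y},
\]
a finite signed combination with integer coefficients. Because $\sum_{l=1}^{j} c_{j,l} = 0$, one can rewrite $\kappa_j$ as a combination of traces of the form $\mathrm{tr}\bigl(K_y^{\,l}(1-K_y)\bigr)$. Each of these is bounded by $\mathrm{tr}\bigl(K_y(1-K_y)\bigr) = \mathrm{Var}(N_y)$, because $0\le K_y\le 1$ as a trace-class operator. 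This is Soshnikov's standard argument; it gives $|\kappa_j| \le C_j \mathrm{Var}(N_y) = O(\log|y|)$.

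\textbf{Main obstacle.} The delicate point is making the \cite{Soshnikov99} variance asymptotics uniform. His result is stated as $y\to-\infty$, whereas we need a bound valid for all $y\le -T^{1-\e}$. Our window only forces $|y|$ to be bounded below, not large, when $T$ is small. I expect this to be handled by continuity of $y\mapsto \mathrm{Var}(N_y)$ on compact sets together with the asymptotics for large $|y|$. Here the integrability of $K(u,u)$ at $+\infty$, from the Airy decay, and the trace-class property on $[y,\infty)$ are what guarantee finiteness and continuity.
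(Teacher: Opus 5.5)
Your proposal is correct and follows essentially the same route as the paper: Soshnikov's logarithmic variance asymptotics, the bound of every higher cumulant by a constant times the variance, expressing even central moments through cumulants of order at least $2$, a separate argument for the bounded range of $|y|$, and H\"older/Cauchy--Schwarz for odd $m$. The only differences are cosmetic. The paper cites the cumulant--variance comparison rather than re-deriving it. For the bounded range ($T^{1-\e}<T_1$) it uses the uniform moment bound (\ref{Eq.MomentBoundS2}) together with monotonicity of $h$ in $x$, where you use continuity of $y\mapsto\mathrm{Var}(N_y)$ and $\log|y|\ge 1-\e$.
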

\begin{proof} From \cite[Theorem 1]{Soshnikov99}, we have that as $T \rightarrow \infty$
\begin{equation}\label{Eq.VarianceExpand}
\mathrm{Var}(h(t,-T)) = \frac{11}{12\pi^2} \cdot \log T + O(1).
\end{equation}
As explained after \cite[Lemma 2]{Soshnikov99}, we can find $C_{\ell} > 0$, depending on $\ell$, such that for all $\ell \geq 2$
\begin{equation}\label{Eq.CumulantBound}
\chi_{\ell}(h(t,-T)) \leq C_{\ell} \cdot \mathrm{Var}(h(t,-T)),
\end{equation}
where $\chi_{\ell}$ is the $\ell$-th order cumulant, see \cite[Chapter 3]{Taqqu}.

From (\ref{Eq.VarianceExpand}), (\ref{Eq.CumulantBound}), and the fact that moments of centered random variables are expressible as finite linear combinations of products of cumulants of order at least $2$, see \cite[Proposition 3.2.1]{Taqqu}, we conclude that there exists a universal $T_1 \geq e$ and constants $C'_{m} > 0$, depending on $m$ alone, such that for $T \geq T_1$ and $m \geq 1$
\begin{equation}\label{Eq.MomentBoundLT}
\mathbb{E}\left[\left(h(t, -T) - \mathbb{E}[h(t,-T)] \right)^{2m}\right] \leq C_{m}' (\log T)^m.
\end{equation}
The inequality in (\ref{Eq.MomentBoundLT}) and the definition of $H_T$ in (\ref{Eq.HeightFunction}) imply (\ref{Eq.LowerTailCentralMoment}) provided that $T^{1-\e} \geq T_1$ and $m$ is even. By using the bound in (\ref{Eq.MomentBoundS2}) and possibly enlarging $C_1(m,A,\e)$, we conclude the inequality for all $T \geq e$ and even $m$. Lastly, the inequality in (\ref{Eq.LowerTailCentralMoment}) for odd $m$ follows from the one for even $m$ and the Cauchy--Schwarz inequality.
\end{proof}

\begin{proposition} \label{Prop.LowerTailMoment} Fix $m \geq 1$, $A \geq 1$, and let $h_T$ be as in (\ref{Eq.HeightFunction}). There exists a constant $C_2(m,A) > 0$ such that for all $T \geq e$ and $t \in \mathbb{R}$
\begin{equation}\label{Eq.LowerTailMoment}
\E[h_T(t, -A)^m] \leq C_2(m,A) \cdot T^{3m/2}.
\end{equation}
In addition, if $\e \in (0,1)$ there exists a constant $C_2'(m,\e) > 0$ such that for all $T \geq e$ and $t \in \mathbb{R}$
\begin{equation}\label{Eq.LowerTailMomentV2}
\E[h_T(t, -T^{-\e})^m] \leq C_2'(m,\e) \cdot T^{3m(1-\e)/2}.
\end{equation}
\end{proposition}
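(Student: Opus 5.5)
We write $h_T(t,-A) = h(T^{1/2}t,-TA)$, which by stationarity has the law of $N(L) := \#\{\text{Airy points in } [-L,\infty)\}$ with $L = TA$. Similarly, $h_T(t,-T^{-\e})$ has the law of $N(T^{1-\e})$. Both bounds therefore reduce to a single claim: there is $C_m>0$ such that for all $L\ge 1$
\begin{equation*}
\E\left[N(L)^m\right] \le C_m L^{3m/2}.
\end{equation*}
Taking $L = TA \ge 1$ gives (\ref{Eq.LowerTailMoment}) with $C_2(m,A) = C_m A^{3m/2}$. Taking $L = T^{1-\e}$, which is at least $1$ since $T \geq e$, gives (\ref{Eq.LowerTailMomentV2}) with a constant independent of $\e$.

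To prove the claim, I split $N(L) = \E[N(L)] + (N(L)-\E[N(L)])$ and use $(a+b)^m \le 2^{m-1}(|a|^m+|b|^m)$.

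\textbf{Centered part.} The cumulant argument in the proof of Proposition~\ref{Prop.LowerTailCentralMoment}, namely (\ref{Eq.VarianceExpand}), (\ref{Eq.CumulantBound}) and (\ref{Eq.MomentBoundLT}), gives
\begin{equation*}
\E\left[|N(L)-\E N(L)|^m\right] \le C (\log L)^{m/2}
\end{equation*}
for $L \ge T_1$. For $1\le L\le T_1$, monotonicity of $N$ in $L$ together with (\ref{Eq.MomentBoundS2}) at $x=-T_1$ bounds the same quantity by a constant. In both cases the centered part is $O(L^{3m/2})$.

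\textbf{Mean.} This is the main estimate. Using the one-point density,
\begin{equation*}
\E N(L) = \int_{-L}^\infty K^{\mathrm{Airy}}(x;x)\,dx, \qquad K^{\mathrm{Airy}}(x;x) = \int_0^\infty \Ai(x+\lambda)^2\,d\lambda,
\end{equation*}
which follows from (\ref{Eq.altAiryKer}) with $t_1=t_2$. The standard Airy asymptotics give $|\Ai(y)| \le C(1+|y|)^{-1/4}$ on $\R$, and $\Ai(y) \le C e^{-y}$ for $y \geq 0$. From these, $K^{\mathrm{Airy}}(x;x) \le C(1+|x|)^{1/2}$ for $x \le 0$: the part of the $\lambda$-integral where $x+\lambda \le 0$ has length $|x|$ and integrand at most $C(1+|x+\lambda|)^{-1/2}$, so it contributes $O(|x|^{1/2})$, and the part where $x+\lambda>0$ contributes $O(1)$. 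For $x>0$ the diagonal decays exponentially. Integrating, $\E N(L) \le C(1+L^{3/2})$, and hence $(\E N(L))^m \le C L^{3m/2}$ for $L \ge 1$.

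Combining the two parts gives the claim. Alternatively, one could quote Soshnikov's formula $\E N(L) = \frac{2}{3\pi}L^{3/2}+O(1)$ for the mean directly.
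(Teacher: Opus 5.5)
Your proof is correct and follows essentially the same route as the paper. In both, the moment is split into the mean plus the centered fluctuation: the fluctuation is controlled by Soshnikov's cumulant bounds (the content of Proposition~\ref{Prop.LowerTailCentralMoment}), and the mean is shown to be $O(L^{3/2})$. The differences are cosmetic. You use $(a+b)^m\le 2^{m-1}(|a|^m+|b|^m)$ instead of the binomial expansion, and you reduce both estimates to one bound on $N(L)$, which makes the constant in the second bound independent of $\e$. You also derive the mean bound directly from the Airy kernel diagonal, where the paper quotes Soshnikov's asymptotic $\mu_T=\frac{2T^{3/2}}{3\pi}+O(1)$.
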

\begin{proof} Put $\mu_T = \mathbb{E}[h(t,-T)]$. As shown above \cite[Theorem 1]{Soshnikov99}, we have as $T \rightarrow \infty$
\begin{equation}\label{Eq.MomentExpandLT}
\mu_T = \frac{2T^{3/2}}{3\pi} + O(1).
\end{equation}
Combining (\ref{Eq.MomentExpandLT}) with the moment bound in (\ref{Eq.MomentBoundS2}), we conclude that there exists a universal constant $C \geq 1$ such that for all $T \geq e$, we have
$\mu_T \leq CT^{3/2}$. Combining the latter with the binomial theorem and the definitions of $h_T, H_T$ in (\ref{Eq.HeightFunction}), we conclude for all $T \geq e$
\begin{equation*}
\begin{split}
&\E[h_T(t, -A)^m] \leq \sum_{j = 0}^m \binom{m}{j} \mu_{TA}^j  \E\ockets{\Abs{H_T(t, -A)}^{m-j}} \\
&\leq \sum_{j = 0}^m \binom{m}{j} C^j (TA)^{3j/2} C_1(m-j,A,1/2) (\log T)^{(m-j)/2},
\end{split}
\end{equation*}
where in the last inequality we used (\ref{Eq.LowerTailCentralMoment}) with $m$ replaced with $m-j$ and $\e = 1/2$, with the convention $C_1(0,A,1/2) = 1$. Since $\log T \leq T^{3}$ for $T \geq e$, the last inequality implies (\ref{Eq.LowerTailMoment}).

We now turn to the proof of (\ref{Eq.LowerTailMomentV2}). From (\ref{Eq.MomentExpandLT}) and the moment bound in (\ref{Eq.MomentBoundS2}), we conclude that there exists a constant $C' \geq 1$, depending on $\e$, such that for all $T \geq e$, we have $\mu_{T^{1-\e}} \leq C' T^{3(1-\e)/2}$. Combining the latter with the binomial theorem and the definitions of $h_T, H_T$ in (\ref{Eq.HeightFunction}), we conclude for all $T \geq e$
\begin{equation*}
\begin{split}
&\E[h_T(t, -T^{-\e})^m] \leq \sum_{j = 0}^m \binom{m}{j} \mu_{T^{1-\e}}^j  \E\ockets{\Abs{H_T(t, -T^{-\e})}^{m-j}} \\
&\leq \sum_{j = 0}^m \binom{m}{j} (C')^j T^{3j(1-\e)/2} C_1(m-j,1,\e) (\log T)^{(m-j)/2},
\end{split}
\end{equation*}
where in the last inequality we used (\ref{Eq.LowerTailCentralMoment}) with $m$ replaced with $m-j$ and $A = 1$, with the convention $C_1(0,1,\e) = 1$. Since $\log T \leq C''T^{3(1-\e)}$ for some $\e$-dependent $C''$ and all $T \geq e$, the last inequality implies (\ref{Eq.LowerTailMomentV2}).
\end{proof}

%
%
\subsection{Upper-tail bounds}\label{Section3.2} In this section, we derive upper-tail bounds for the moments of $H_T(t,x)$ when $x>0$ and $T$ is large. In contrast to the lower-tail regime studied in Section~\ref{Section3.1}, the expected number of Airy particles above the level $Tx$ decays super-exponentially as $T\to\infty$. Consequently, the moments of $H_T(t,x)$ are also negligible on the scales relevant to the proof of Theorem~\ref{Thm.MomentConvergence}. The main result of this section is Proposition~\ref{Prop.UpperTailCentralMoment} below, for which we require the following lemma.

\begin{lemma} \label{Lem.UpperTailMean}
Fix $t \in \mathbb{R}$, $x,T > 0$ with $xT \geq 1$. If $h_T$ is as in (\ref{Eq.HeightFunction}), then for some universal $C_3 > 0$
\begin{equation}\label{Eq.HeightMeanUTB}
\E[h_T(t, x)] \leq C_3 (Tx)^{3/2} e^{-\frac{4}{3} (Tx)^{3/2}}.
\end{equation}
\end{lemma}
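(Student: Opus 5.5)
By stationarity and the determinantal structure, $\E[h_T(t,x)] = \E[h(0,Tx)] = \int_{Tx}^\infty K^{\mathrm{Airy}}(y;y)\,dy$, where $K^{\mathrm{Airy}}$ is the Airy kernel of Section \ref{Section2.2}. The first moment of the counting function of a determinantal point process equals the integral of the diagonal of its kernel. Set $s = Tx \geq 1$; it then suffices to show that $\int_s^\infty K^{\mathrm{Airy}}(y;y)\,dy \leq C_3 s^{3/2} e^{-\frac{4}{3}s^{3/2}}$ for all $s\geq 1$.

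To control the diagonal, use (\ref{Eq.altAiryKer}) with $t_1=t_2$:
\[
K^{\mathrm{Airy}}(y;y)=\int_0^\infty \Ai(y+\lambda)^2\,d\lambda .
\]
Then, by Tonelli,
\[
\int_s^\infty K^{\mathrm{Airy}}(y;y)\,dy=\int_s^\infty\int_0^\infty \Ai(y+\lambda)^2\,d\lambda\,dy=\int_s^\infty (u-s)\Ai(u)^2\,du,
\]
after substituting $u=y+\lambda$ and noting that for fixed $u$ the measure of the admissible $y\in[s,u]$ is $u-s$.

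Next, insert the classical bound $0<\Ai(u)\le C u^{-1/4}e^{-\frac23 u^{3/2}}$ for $u\ge 1$, which follows from the standard asymptotic expansion of $\Ai$ together with continuity and positivity. This gives
\[
\int_s^\infty (u-s)\Ai(u)^2\,du \le C^2\int_s^\infty (u-s)u^{-1/2}e^{-\frac43 u^{3/2}}\,du \le C^2 s^{-1/2}\int_s^\infty (u-s)e^{-\frac43 u^{3/2}}\,du .
\]
Writing $u=s+v$ and using convexity, $u^{3/2}\ge s^{3/2}+\tfrac32 s^{1/2}v$, so the last integral is at most
\[
e^{-\frac43 s^{3/2}}\int_0^\infty v e^{-2s^{1/2}v}\,dv=\frac{e^{-\frac43 s^{3/2}}}{4s}.
\]
Hence $\E[h_T(t,x)]\le \tfrac{C^2}{4}s^{-3/2}e^{-\frac43 s^{3/2}}$, and since $s\ge1$ we have $s^{-3/2}\le s^{3/2}$, which gives (\ref{Eq.HeightMeanUTB}).

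The computations above are routine. The main point needing care is justifying $\E[h(0,s)]=\int_s^\infty K^{\mathrm{Airy}}(y;y)\,dy$ for the unbounded interval $[s,\infty)$. I would handle this by applying the first-correlation-function identity on $[s,R]$, which is standard for determinantal processes (e.g. \cite[Section 2]{Dimitrov26}), and letting $R\to\infty$ by monotone convergence. The Airy tail bound is standard and would simply be cited.
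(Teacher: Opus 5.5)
Your proof is correct, and it starts the same way as the paper's. Both arguments reduce $\E[h_T(t,x)]$ to $\int_s^\infty (u-s)\Ai(u)^2\,du$ with $s=Tx$; the paper gets there by integrating by parts in $y$, and you get there by Tonelli.

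After that the routes diverge. The paper uses $\Ai''(\lambda)=\lambda\Ai(\lambda)$ to find explicit antiderivatives, which gives the closed form $\frac{1}{3}\bigl(2s^2\Ai(s)^2-2s\Ai'(s)^2-\Ai(s)\Ai'(s)\bigr)$. It then bounds each term separately, using Olver's bounds on both $\Ai$ and $\Ai'$. The first two terms cancel at leading order, so bounding them individually is what produces the prefactor $s^{3/2}$.

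You instead estimate the integral directly, using only the upper bound on $\Ai$, the monotonicity $u^{-1/2}\le s^{-1/2}$, and the tangent-line inequality $u^{3/2}\ge s^{3/2}+\tfrac32 s^{1/2}(u-s)$. This is shorter, needs no information about $\Ai'$, and requires no check of boundary terms at infinity. It also yields the sharp order $s^{-3/2}e^{-\frac43 s^{3/2}}$ (the true asymptotic is $\frac{1}{16\pi}s^{-3/2}e^{-\frac43 s^{3/2}}$), which you then weaken to the stated bound using $s\ge 1$.

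The paper's route buys an exact identity for the mean. However, neither that identity nor your sharper prefactor matters downstream, where only the super-exponential decay is used.
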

\begin{proof}
Using the stationarity of the Airy line ensemble, the identity (\ref{Eq.HeightToMeasure}), the determinantal structure of the Airy point process, and the kernel formula \eqref{Eq.altAiryKer}, we obtain
\begin{equation}\label{Eq.UTMean1}
\E[h_T(t, x)] = \E[h_T(0, x)] = \int_{Tx}^\infty \diff y K^\Airy(y;y) = \int_{Tx}^\infty \diff y \int_y^\infty \diff \lambda \Ai(\lambda)^2.
\end{equation}

By \cite[Chapter 11.1]{Olver74}, for every $\lambda>0$,
\begin{equation}\label{Eq.AiryFunctionBound}
0 < \Ai(\lambda) \leq \frac{e^{-\frac{2}{3}\lambda^{3/2}}}{2\pi^{1/2}\lambda^{1/4}} , \mbox{ and }\abs{\Ai'(\lambda)}\leq \paren{1 + \frac{7}{48 \lambda^{3/2}}} \frac{\lambda^{1/4} e^{-\frac{2}{3}\lambda^{3/2}}}{2 \pi^{1/2} }.
\end{equation}
Integrating (\ref{Eq.UTMean1}) by parts with respect to $y$, and using (\ref{Eq.AiryFunctionBound}) to justify the vanishing of the boundary term at infinity, we obtain
\begin{equation*}
\E[h_T(0, x)] = -Tx \int_{Tx}^\infty \diff \lambda \Ai(\lambda)^2 - \int_{Tx}^\infty \diff y (-y \Ai(y)^2).
\end{equation*}

Using that $\Ai''(\lambda) = \lambda \Ai(\lambda)$, see \cite[Chapter 11.1]{Olver74}, and the preceding identity, we get
\begin{equation*}
\begin{aligned}
\E[h_T(0, x)] & = -Tx \ckets{\lambda \Ai(\lambda)^2 - (\Ai'(\lambda))^2}_{\lambda=Tx}^{\lambda=\infty} + \ckets{\frac{y^2 \Ai(y)^2 - y (\Ai'(y))^2 + \Ai(y) \Ai'(y)}{3}}_{y=Tx}^{y=\infty} \\
& = \frac{2 (Tx)^2 \Ai(Tx)^2 - 2 Tx (\Ai'(Tx))^2 - \Ai(Tx) \Ai'(Tx)}{3}.
\end{aligned}
\end{equation*}
As before, the boundary terms at $\lambda=\infty$ and $y=\infty$ vanish by
\eqref{Eq.AiryFunctionBound}. Applying these same bounds to each term in the final expression above yields (\ref{Eq.HeightMeanUTB}).
\end{proof}

With the preceding result in hand, we are ready to establish a super-exponential upper bound on the moments of $H_T$.
\begin{proposition}\label{Prop.UpperTailCentralMoment}
Fix $t \in \mathbb{R}$, $m \geq 1$, and $x,T > 0$ with $xT \geq 1$. If $H_T$ is as in (\ref{Eq.HeightFunction}), then for some $C_4(m) > 0$,
\begin{equation}\label{Eq.MomentUBUT}
\E[\abs{H_T(t, x)}^m] \leq C_4(m) \cdot (Tx)^{3/2} e^{-\frac{4}{3} (Tx)^{3/2}}.
\end{equation}
\end{proposition}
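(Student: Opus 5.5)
We need to bound $\E|H_T|^m$ where $H_T = h_T - \E h_T$, with $h_T(t,x)$ a nonnegative integer-valued random variable whose mean is tiny by Lemma~\ref{Lem.UpperTailMean}. The plan is to first reduce everything to moments of $h_T$ itself, and then control $\E[h_T^m]$ by the mean using the determinantal structure.

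\textbf{Step 1: reduction to moments of $h_T$.} Write $N = h_T(t,x)$ and $\mu = \E[N]$. Using $|N-\mu|^m \le 2^{m-1}(N^m + \mu^m)$ gives
\[
\E|H_T(t,x)|^m \le 2^{m-1}\left(\E[N^m] + \mu^m\right).
\]
By Lemma~\ref{Lem.UpperTailMean}, $\mu \le C_3 (Tx)^{3/2}e^{-\frac43 (Tx)^{3/2}}$. The function $u\mapsto u^{3/2}e^{-\frac43 u^{3/2}}$ is bounded on $[1,\infty)$, so $\mu^m \le C\mu$ with $C$ depending only on $m$. It therefore suffices to show $\E[N^m] \le C(m)\,\mu$.

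\textbf{Step 2: moments via factorial moments.} Since $N$ takes values in $\mathbb{Z}_{\ge 0}$, we have $N^m = \sum_{k=1}^m S(m,k)\, N(N-1)\cdots(N-k+1)$, where $S(m,k)$ are Stirling numbers. For the Airy point process $M^{t;\mathcal{A}}$ the factorial moments are integrals of the correlation functions:
\[
\E[N(N-1)\cdots(N-k+1)] = \int_{[Tx,\infty)^k} \det\left[K^{\mathrm{Airy}}(y_i;y_j)\right]_{i,j=1}^k \, dy_1\cdots dy_k .
\]
The kernel $K^{\mathrm{Airy}}$ is symmetric and positive semidefinite; by \eqref{Eq.altAiryKer} it is a Gram kernel, $K(y_i,y_j)=\int_0^\infty \Ai(y_i+\lambda)\Ai(y_j+\lambda)\,d\lambda$. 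Hadamard's inequality then gives $\det[K(y_i;y_j)] \le \prod_i K(y_i;y_i)$, so
\[
\E[N(N-1)\cdots(N-k+1)] \le \left(\int_{Tx}^\infty K^{\mathrm{Airy}}(y;y)\,dy\right)^k = \mu^k .
\]
This is the step I expect to need the most care: one must confirm that the factorial-moment formula holds with the reference measure $\mathrm{Leb}$ restricted to $[Tx,\infty)$, and that positive semidefiniteness is available. Both follow from \eqref{Eq.altAiryKer}, which makes $K^{\mathrm{Airy}}$ the kernel of a projection.

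\textbf{Step 3: conclusion.} Combining Step 2 with $\mu^k \le C\mu$ for $k \ge 1$ (again from the boundedness of $\mu$ on the range $Tx \ge 1$) yields $\E[N^m] \le C(m)\,\mu$. Inserting this into Step 1 and applying Lemma~\ref{Lem.UpperTailMean} gives \eqref{Eq.MomentUBUT}.
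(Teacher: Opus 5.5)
Your proposal is correct and follows essentially the same route as the paper. Both arguments expand the $m$-th moment into factorial moments via Stirling numbers, bound each by $\E[h_T]^k$ using the determinantal factorial-moment formula with Hadamard's inequality, invoke Lemma~\ref{Lem.UpperTailMean}, and finish with $|a-b|^m \le 2^{m-1}(a^m+b^m)$. The differences are cosmetic: you absorb $\mu^m$ using the boundedness of $\mu$ on $Tx \ge 1$ rather than by Jensen's inequality, and you make explicit, through the Gram representation from (\ref{Eq.altAiryKer}), the positive semidefiniteness that the paper's use of Hadamard's inequality leaves implicit.
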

\begin{proof}
By the stationarity of the Airy line ensemble and \cite[(4.1.3)]{Roman}, we have for $m \geq 1$
\begin{equation}\label{Eq.FallingFactorials}
\E[h_T(t, x)^m] = \E[h_T(0, x)^m] = \sum_{k=1}^m  S(m,k) \E[h_T(0, x)^{\underline{k}}],
\end{equation}
where $S(m,k)$ denotes the Stirling number of the second kind and $a^{\underline{k}} = a (a - 1) \cdots (a - k + 1)$ denotes the falling factorial power.

Using the factorial moment formula for determinantal point processes \cite[(2.13)]{Dimitrov26} and Hadamard's inequality \cite[Corollary 33.2.1.1]{Prasolov}, we obtain
\begin{equation}\label{Eq.Hadamard'sInequality}
\begin{split}
\E[h_T(0, x)^{\underline{k}}]
& = \int_{Tx}^\infty \diff y_1 \cdots \int_{Tx}^\infty \diff y_k \det [\widetilde{K}(0, y_i; 0, y_j)]_{i, j = 1}^k \\
& \leq \int_{Tx}^\infty \diff y_1 \cdots \int_{Tx}^\infty \diff y_k \prod_{j=1}^k \widetilde{K}(0, y_j; 0, y_j) = \E[h_T(0, x)]^k.
\end{split}
\end{equation}
Here we used that the Airy point process is determinantal with correlation kernel $\widetilde{K}(0,x;0,y)$, as follows from \cite[Lemma 2.17]{Dimitrov26} and Lemma~\ref{lemma: Airykernel}.

Combining \eqref{Eq.FallingFactorials}, \eqref{Eq.Hadamard'sInequality}, and Lemma~\ref{Lem.UpperTailMean}, we conclude that there exists a constant $C'_4(m) > 0$ such that
\begin{equation*}
\E[h_T(t, x)^m] \leq C'_4(m) \cdot (Tx)^{3/2} e^{-\frac{4}{3} (Tx)^{3/2}}.
    \end{equation*}
Using the convexity of $x\mapsto |x|^m$, we get
\begin{equation*}
\begin{aligned}
    \E[\abs{H_T(t,x)}^m]
    &=
    2^m
    \E\left[
        \left|
        \frac{h_T(t,x)-\E[h_T(t,x)]}{2}
        \right|^m
    \right] \le
    2^m
    \E\left[
        \frac{|h_T(t,x)|^m+|\E[h_T(t,x)]|^m}{2}
    \right] \\
    &=
    2^{m-1}
    \left(
        \E[h_T(t,x)^m]
        +
        |\E[h_T(t,x)]|^m
    \right) 
    \le
    2^m \E[h_T(t,x)^m],
\end{aligned}
\end{equation*}
where the last inequality follows from Jensen's inequality. The last two inequalities imply (\ref{Eq.MomentUBUT}).
\end{proof}

%
%
\section{Steepest descent preliminaries}\label{sec. estimates}
The proof of Theorem~\ref{Thm.MomentConvergence} relies on a steepest descent analysis, which is carried out in Section~\ref{Sec.CorrelationFunctionConvergence}. In preparation for that analysis, we introduce the relevant phase function and contours, establish decay estimates along the latter, and derive several bounds for the associated contour integrals that will be used later in the proof.

%
%
\subsection{Phase function and contours}\label{Section4.1} Our main object of interest in this section is the following function, which depends on $z \in \mathbb{C}$ and $t,x \in \mathbb{R}$:
\begin{equation}\label{Eq.DefS}
S(z; t, x) = \frac{z^3}{3} - z^2 t + z(t^2 - x).
\end{equation}
The function $S$ appears in the exponent of the kernel formula (\ref{Eq.ExtendedAiryKernel}) and plays a central role in the steepest descent analysis. In what follows, we analyze some of its properties.

Note that $S'(z;t,x) = (z-t)^2 - x$, which for $x < 0$ has the complex conjugate roots
\begin{equation}\label{Eq.RootsOfS}
\crit(t,x) = \crit^+(t,x) = t + \im \sqrt{-x} \mbox{ and } \crit^-(t, x)  = \overline{\crit(t, x)} = t - \im \sqrt{-x}.
\end{equation}
We next introduce certain contours that pass through these critical points.
\begin{definition}\label{def: contour} Fix $t \in \mathbb{R}$, $x < 0$, $\eta > 0$ and let $\crit^{\pm} = \crit^{\pm}(t,x)$ be as in (\ref{Eq.RootsOfS}), and $\bar{\eta} = \min(\eta, \sqrt{-x})$. Given this data, we define the contour $\contour = \contour(t,x,\eta)$ by
\begin{equation}\label{Eq.ContourBeth}
\begin{aligned}
\contour = & \{\crit^+  - \sqrt{3}\eta + \im u : u \in (\eta, \infty)\} \cup \{\crit^+  - \sqrt{3}u + \im u : u \in [-\bar{\eta}, \eta]\} \\
& \cup \{ t + \sqrt{3}\bar{\eta} + \im u : u \in [-\sqrt{-x} + \bar{\eta}, \sqrt{-x} - \bar{\eta}]\}  \\
& \cup \{\crit^-  - \sqrt{3}u - \im u : u \in [-\bar{\eta}, \eta]\} \cup \{\crit^-  - \sqrt{3}\eta - \im u : u \in (\eta, \infty)\},
\end{aligned}
\end{equation}
oriented in the direction of increasing imaginary part; see Figure~\ref{fig:braids}. We also define the following subcontours of $\contour$, equipped with the same orientation
\begin{equation*}
\begin{aligned}
&\contour ^\pm = \contour  \cap \{\pm \Im z > 0\}, \hspace{2mm}  \contour^{\pm,0} = \{\crit^\pm  - \sqrt{3}u \pm \im u : u \in [-\bar{\eta}, \eta]\}, \hspace{2mm} \contour^{\pm,1}  = \contour^\pm  \setminus \contour^{\pm,0}.
\end{aligned}
\end{equation*}
We denote by $\refbeth $ the reflection of $\contour $ across the line $t  + \im \mathbb{R}$, and we define $\refbeth^\pm $, $\refbeth^{\pm,0} $, and $\refbeth^{\pm,1} $ analogously.
\end{definition}
\begin{remark}
In words, $\contour^{+,0} $ and $\contour^{-,0}$ are exactly the two diagonal line segments of $\contour$. In addition, $\contour^{+,1}$ consists of one vertical line segment connecting $t + \sqrt{3}\bar{\eta}$ and $t + \sqrt{3}\bar{\eta} + \im (\sqrt{-x} - \bar{\eta})$, and the ray from $t - \sqrt{3}\eta + \im (\sqrt{-x} + \eta)$ and going vertically up. An analogous statement holds for $\contour^{-,1}$. Note that when $\eta > \sqrt{-x}$, the line segments in $\contour^{\pm,1}$ degenerate to single points.
\end{remark}

\begin{figure}[ht]
        \centering
        \begin{tikzpicture}[line width=1.1pt, line cap=round, line join=round]
            \pgfmathsetmacro{\aA}{0.5}
            \pgfmathsetmacro{\aB}{1.5}
            
            \newcommand{\twistpair}[4]{%
            \pgfmathsetmacro{\w}{1}
            \pgfmathsetmacro{\yt}{#2}
            \pgfmathsetmacro{\yb}{#3}
            
            \pgfmathsetmacro{\yA}{#2-\aA}
            \pgfmathsetmacro{\yB}{#2-\aB}
            \pgfmathsetmacro{\yC}{#3+\aB}
            \pgfmathsetmacro{\yD}{#3+\aA}
            
            \pgfmathsetmacro{\ycUp}{(#2-\aA + #2-\aB)/2}
            \pgfmathsetmacro{\ycDn}{(#3+\aB + #3+\aA)/2}
            
            \draw[black,dashed] (#1-\w,\yt)
                  -- (#1-\w,\yA)
                  -- (#1+\w,\yB)
                  -- (#1+\w,\yC)
                  -- (#1-\w,\yD)
                  -- (#1-\w,\yb);
            
            \draw (#1+\w,\yt)
                  -- (#1+\w,\yA)
                  -- (#1-\w,\yB)
                  -- (#1-\w,\yC)
                  -- (#1+\w,\yD)
                  -- (#1+\w,\yb);

            \filldraw[black] (#1,\ycUp) circle (2pt);
            \filldraw[black] (#1,\ycDn) circle (2pt);
            
            \node[inner sep=1pt] at (#1+1.5,\ycUp) {$t +\im\sqrt{-x}$};
            \node[inner sep=1pt] at (#1+1.5,\ycDn) {$t -\im\sqrt{-x}$};
            \draw [stealth-stealth] (#1+\w,\yb-0.2) -- (#1-\w,\yb-0.2) node [midway, below] {$2\sqrt{3}\eta$};

            }

            \newcommand{\othertwistpair}[4]{%
            \pgfmathsetmacro{\otheraA}{1.5}
            \pgfmathsetmacro{\otheraB}{\otheraA + 0.7}
            
            \pgfmathsetmacro{\w}{1}
            \pgfmathsetmacro{\otherw}{\w - 0.5}
            \pgfmathsetmacro{\yt}{#2}
            \pgfmathsetmacro{\yb}{#3}
            
            \pgfmathsetmacro{\yA}{#2-\otheraA -0.2}
            \pgfmathsetmacro{\yB}{#2-\otheraB}
            \pgfmathsetmacro{\yC}{#3+\otheraB}
            \pgfmathsetmacro{\yD}{#3+\otheraA + 0.2}
            
            \pgfmathsetmacro{\ycUp}{0.28}
            \pgfmathsetmacro{\ycDn}{-0.28}
            
            \draw[black,dashed] (#1-\w,\yt)
                  -- (#1-\w,\yA)
                  -- (#1+\otherw,0)
                  -- (#1-\w,\yD)
                  -- (#1-\w,\yb);
            
            \draw (#1+\w,\yt)
                  -- (#1+\w,\yA)
                  -- (#1-\otherw,0)
                  -- (#1+\w,\yD)
                  -- (#1+\w,\yb);

            \filldraw[black] (#1,\ycUp) circle (2pt);
            \filldraw[black] (#1,\ycDn) circle (2pt);
            
            \node[inner sep=1pt] at (#1+1.5,\ycUp) {$t+\im\sqrt{-x}$};
            \node[inner sep=1pt] at (#1+1.5,\ycDn) {$t-\im\sqrt{-x}$};

            \draw [stealth-stealth] (#1+\w,\yb-0.2) -- (#1-\w,\yb-0.2) node [midway, below] {$2\sqrt{3}\eta$};
            }
            
            \othertwistpair{-3}{2.5}{-2.5}{1}
            \twistpair{3}{2.5}{-2.5}{2}
        
        \end{tikzpicture}
        \caption{The contour $\contour$ (dashed) and its reflection $\refbeth$ (solid). The left side depicts $\contour$ and $\refbeth$ when $\eta \geq \sqrt{-x}$, and the right side depicts these contours when $\eta < \sqrt{-x}$.} 
        \label{fig:braids}
    \end{figure}

The next lemma summarizes some well-separatedness estimates for the contours in Definition \ref{def: contour}.
\begin{lemma}\label{Lem.Separated} Fix $t \in \mathbb{R}$, $x < 0$, $\eta > 0$, and assume the notation in Definition \ref{def: contour}. If $(z,w) \in \refbeth \times \contour^{\pm, 1}$ or $(z,w) \in \refbeth^{\pm,1} \times \contour$, then $|z - w| \geq \eta$.
\end{lemma}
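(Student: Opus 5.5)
We reduce to a statement about plane geometry and then check finitely many pairs of pieces. All the contours in Definition \ref{def: contour} move rigidly under translation in $t$, so we may take $t = 0$. Write $a = \sqrt{-x} > 0$, so that $\crit^\pm = \pm \im a$.

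Two symmetries cut down the work.
\begin{itemize}
\item Complex conjugation maps $\contour$ to itself, maps $\refbeth$ to itself, and exchanges the pieces labelled $+$ and $-$. So it suffices to treat $\contour^{+,1}$ and $\refbeth^{+,1}$.
\item The reflection $z \mapsto -\bar z$ across $\im \mathbb{R}$ preserves distances, exchanges $\contour$ and $\refbeth$, and maps $\refbeth^{+,1}$ onto $\contour^{+,1}$. So the case $(z,w) \in \refbeth^{+,1} \times \contour$ follows from the case $(z,w) \in \refbeth \times \contour^{+,1}$.
\end{itemize}
It therefore remains to show that every $w \in \contour^{+,1}$ is at distance at least $\eta$ from $\refbeth$.

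We first record the pieces explicitly.
\begin{itemize}
\item $\contour^{+,1}$ consists of the ray $R = \{-\sqrt{3}\eta + \im s : s > a+\eta\}$, together with the segment $V = \{\sqrt{3}\bar\eta + \im s : 0 < s \le a - \bar\eta\}$. The segment $V$ is empty when $\bar\eta = a$, because $\contour^+$ only contains points with $\Im z > 0$; this is exactly what handles the degenerate case $\eta \ge a$.
\item $\refbeth$ consists of the ray $\{\sqrt{3}\eta + \im s : s \ge a+\eta\}$ and its conjugate.
\item It contains the diagonal $\refbeth^{+,0} = \{\sqrt{3}u + \im(a+u) : u \in [-\bar\eta,\eta]\}$ and its conjugate. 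This diagonal lies on the line $\ell^+ = \{X - \sqrt{3}(Y - a) = 0\}$, where we write $X = \Re z$ and $Y = \Im z$.
\item It contains the vertical segment $\{-\sqrt{3}\bar\eta + \im s : |s| \le a-\bar\eta\}$.
\end{itemize}

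\emph{Distances from the ray $R$.} Take $w = -\sqrt{3}\eta + \im s$ with $s > a+\eta$ and compare it with each piece of $\refbeth$.
\begin{itemize}
\item The ray of $\refbeth$ lies on $\Re z = \sqrt{3}\eta$, at horizontal distance $2\sqrt{3}\eta$.
\item The upper diagonal is at distance at least the distance to the line $\ell^+$, namely $\sqrt{3}(\eta + s - a)/2 \ge \sqrt{3}\eta$.
\item The vertical segment and the entire lower half of $\refbeth$ have imaginary part at most $a - \bar\eta$, so the vertical gap is at least $\eta + \bar\eta \ge \eta$.
\end{itemize}

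\emph{Distances from the segment $V$.} Here $\bar\eta = \eta < a$. Take $w = \sqrt{3}\eta + \im s$ with $0 < s \le a-\eta$.
\begin{itemize}
\item The ray of $\refbeth$ is at vertical distance at least $2\eta$.
\item The line $\ell^+$ is at distance $\sqrt{3}(\eta + a - s)/2 \ge \sqrt{3}\eta$.
\item The vertical segment of $\refbeth$, which lies on $\Re z = -\sqrt{3}\eta$, is at horizontal distance $2\sqrt{3}\eta$.
\item The lower diagonal lies on the conjugate line $X + \sqrt{3}(Y + a) = 0$. The distance from $w$ to this line is $\sqrt{3}(\eta + s + a)/2 \ge \sqrt{3}\eta$.
\item The lower ray of $\refbeth$ has imaginary part at most $-a-\eta$, so it is at vertical distance more than $a+\eta$.
\end{itemize}
Every distance in both lists is at least $\eta$, which proves the lemma.

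The only real subtlety is the degenerate regime $\eta \ge a$. There the naive distance from the point $\sqrt{3}a$ to $\refbeth$ can be as small as $2\sqrt{3}a < \eta$. The argument relies on the fact that this point lies on the real axis and is therefore excluded from $\contour^{\pm,1}$, so the bookkeeping of which pieces are empty must be done carefully.
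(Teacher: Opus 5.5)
Your proof is correct and follows essentially the same route as the paper. You use the reflection across $t+\im\mathbb{R}$ to reduce to $(z,w)\in\refbeth\times\contour^{\pm,1}$ and then verify the separation by elementary plane geometry. The differences are that you add the conjugation symmetry, replace the paper's figure-based ``closest points'' argument (and its split into $\eta<\sqrt{-x}$ versus $\eta\geq\sqrt{-x}$) with explicit horizontal, vertical and point-to-line distances for each piece, and absorb the degenerate regime into the observation that $V$ is empty.

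One harmless slip in your closing remark: the distance from the excluded point $\sqrt{3}a$ to $\refbeth$ is $\sqrt{3}a$ (attained on the diagonals of $\refbeth$), not $2\sqrt{3}a$. So that point would violate the bound exactly when $\eta>\sqrt{3}a$; this does not affect the argument.
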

\begin{proof} We prove the statement when $(z,w) \in \refbeth \times \contour^{\pm, 1}$, and note that it implies the one when $(z,w) \in \refbeth^{\pm, 1} \times \contour$ by reflection symmetry across the vertical line through $t$.

Suppose first that $\eta < \sqrt{-x}$. Looking at the right side of Figure \ref{fig:braids}, we know that $w$ belongs to one of the three dashed vertical lines. There are four points closest to $\contour$; their distance is the same and equal to the height of a right triangle with sides $2\eta, 2\sqrt{3}\eta, 4\eta$. Consequently, $|z-w| \geq \sqrt{3}\eta$.

Suppose next that $\eta \geq \sqrt{-x}$. Looking at the left side of Figure \ref{fig:braids}, we know that $w$ belongs to one of the two dashed vertical lines. If $\Re(z) \geq t$, then $|z-w| \geq |\Re(z-w)| \geq\sqrt{3} \eta$. If $\Re(z) \leq t$, then $|z-w| \geq |\Im(z-w)| \geq \min(|\Im(w) - \sqrt{-x}|, |\Im(w) + \sqrt{-x}|) \geq \eta$.
\end{proof}

The following lemma estimates the real part of $S$ along the contours in Definition \ref{def: contour}.
\begin{lemma}\label{Lem.Derivatives} Fix $t \in \mathbb{R}$, $x < 0$, $\eta > 0$. Let $S(z;t,x)$ be as in (\ref{Eq.DefS}) and assume the notation in Definition \ref{def: contour}. Then the following statements hold.
\begin{enumerate}
\item[(a)] If $z^{\pm} = \crit^\pm +\sqrt{3}u \pm \im u \in \refbeth^{\pm,0}$, then 
\begin{equation}\label{eq: upper1}
\Re \left[S(z^{\pm}; t, x)- S(\crit^\pm;t, x)\right] = -2 \sqrt{3} \cdot \sqrt{-x} u^2.
\end{equation}
\item[(b)] If $z^{\pm} = \crit^\pm + \sqrt{3}\eta \pm \im u \in \refbeth^{\pm,1}$ and $u \geq \eta$, then
\begin{equation}\label{eq: upper2}
\Re \left[S(z^{\pm}; t, x)- S(\crit^\pm;t, x)\right] \leq -\sqrt{-x} u\eta.
\end{equation}
\item[(c)] If $\eta< \sqrt{-x}$, $z^{\pm}=\crit^\pm - \sqrt{3}\eta \mp \im u \in \refbeth^{\pm,1}$ and $\eta \leq u \leq \sqrt{-x}$, then
\begin{equation}\label{eq: upper3}
\Re \left[S(z^{\pm}; t, x)- S(\crit^\pm;t, x)\right] \leq -\sqrt{-x}\eta^2.
\end{equation}
\end{enumerate}
Moreover, the same inequalities hold with $\refbeth^\pm$ replaced by $\contour^\pm$ and $S$ replaced by $-S$.
\end{lemma}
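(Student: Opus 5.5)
The plan is to reduce everything to an explicit local expansion of $S$ around its critical points. Because $S$ is a cubic with $S'(\crit^\pm)=0$, $S''(z)=2(z-t)$ and $S'''\equiv 2$, Taylor's formula is exact. Writing $s=\sqrt{-x}$ and using $\crit^\pm - t = \pm \im s$, we get for every $v\in\mathbb{C}$
\begin{equation*}
S(\crit^\pm+v;t,x)-S(\crit^\pm;t,x) = \pm \im s\, v^2 + \frac{v^3}{3}.
\end{equation*}
Each part of the lemma then amounts to computing the real part of this expression for the specific increment $v$ describing the relevant piece of $\refbeth^{\pm}$.

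It suffices to treat the $+$ case. The polynomial $S$ has real coefficients, so $S(\bar z)=\overline{S(z)}$. Complex conjugation maps $\refbeth^{+}$ onto $\refbeth^-$ and $\crit^+$ onto $\crit^-$, and the parametrizations in (a)--(c) are exactly conjugate to each other. Hence the real parts agree, and the $-$ case follows.

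For (a), take $v=u(\sqrt3+\im)=2u e^{\im\pi/6}$. Then $v^3=8\im u^3$ is purely imaginary. Also $\im s v^2=4\im s u^2 e^{\im\pi/3}$, whose real part is $-2\sqrt3 s u^2$, and this gives \eqref{eq: upper1} with equality.

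For (b), take $v=\sqrt3\eta+\im u$. We have $\Re(\im s v^2)=-2\sqrt3 s\eta u$ and $\Re(v^3)=3\sqrt3\eta(\eta^2-u^2)\le 0$ for $u\ge\eta$. Therefore the real part is at most $-2\sqrt3 s\eta u\le -s\eta u$.

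For (c), take $v=-\sqrt3\eta-\im u$ with $\eta\le u\le s$. Again $\Re(\im s v^2)=-2\sqrt3 s\eta u$, while $\tfrac13\Re(v^3)=-\sqrt3\eta^3+\sqrt3\eta u^2\le \sqrt3 s\eta u$, using $u\le s$. The total is therefore at most $-\sqrt3 s\eta u\le -\sqrt3 s\eta^2\le -s\eta^2$, using $u\ge\eta$.

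For the final claim about $\contour^\pm$, I would use the reflection $z\mapsto 2t-\bar z$ across $t+\im\mathbb{R}$. Writing $z=t+w$, we have $S(t+w;t,x)-S(t;t,x)=w^3/3-xw$, and $S(t;t,x)$ is real. This difference is an odd polynomial in $w$ with real coefficients, so $f(-\bar w)=-\overline{f(w)}$. Consequently $\Re S(2t-\bar z)-S(t)=-\Re[S(z)-S(t)]$. The reflection fixes $\crit^\pm$ and maps $\refbeth^{\pm,j}$ onto $\contour^{\pm,j}$, so the inequalities transfer with $S$ replaced by $-S$.

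The only real care needed is bookkeeping: checking that the parametrizations in (b) and (c) match the reflected pieces of Definition~\ref{def: contour}, for instance that in (c) the vertical segment sits at real part $t-\sqrt3\eta$ with $\Im z=s-u\in[0,s-\eta]$. The inequalities themselves are elementary once the exact expansion above is in place.
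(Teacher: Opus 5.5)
Your proposal is correct and follows essentially the paper's route: an exact cubic Taylor expansion at $\crit^\pm$, reduction to the $+$ case by conjugation, and direct evaluation of real parts, which the paper packages as $\bigl(-2\sqrt{-x}B+\tfrac{A^2-3B^2}{3}\bigr)A$. Your explicit identity $\Re[S(2t-\bar z)-S(\crit^\pm)]=-\Re[S(z)-S(\crit^\pm)]$ also supplies the reflection details that the paper omits for the $\contour^\pm$ statement.
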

\begin{proof} Since $S(\bar z;t,x)=\overline{S(z;t,x)}$ for all $z\in\mathbb C$, the statements for the ``minus'' sign follow from those for the ``plus'' sign by complex conjugation. Thus, it suffices to prove the latter.

A direct computation yields
\begin{equation*}
S(\crit^+; t, x) = \frac{t^3 - 3tx - 4\im (-x)^{3/2}}{3},\quad S'(\crit^+; t,x) = 0, \quad S''(\crit^+; t, x) =  2 \im \sqrt{-x}.
\end{equation*}
Consequently, the Taylor expansion of $S$ around $\crit^+$ is
\begin{equation*}
S(z; t, x) = \frac{t^3}{3} - tx - \im \frac{4(-x)^{3/2}}{3} + \im \sqrt{-x} (z - \crit^+)^2 + \frac{1}{3} (z - \crit^+)^3.
\end{equation*}
Setting $A = \Re(z - \crit^+)$ and $B = \Im(z - \crit^+)$, we obtain
\begin{equation}\label{Eq.RealPartMain}
\Re \left[S(z; t, x)- S(\crit^+;t, x)\right] = \paren{- 2 \sqrt{-x} B + \frac{A^2 - 3B^2}{3}} A.
\end{equation}

Equation (\ref{eq: upper1}) follows from (\ref{Eq.RealPartMain}) with $A = \sqrt{3}u$, $B = u$. If $z = \crit^+ + \sqrt{3}\eta + \im u$ with $u \geq \eta$, then (\ref{Eq.RealPartMain}) with $A = \sqrt{3}\eta$ and $B = u$ implies
$$\Re \left[S(z; t, x)- S(\crit^+;t, x)\right] = \paren{- 2 \sqrt{-x} u + \frac{3\eta^2 - 3u^2}{3}} \sqrt{3}\eta \leq -2\sqrt{3} \cdot \sqrt{-x}u\eta,$$
which is stronger than (\ref{eq: upper2}). Similarly, if $z = \crit^+ - \sqrt{3}\eta - \im u$, $\eta< \sqrt{-x}$ and $\eta \leq u \leq \sqrt{-x}$, then from (\ref{Eq.RealPartMain}) with $A = -\sqrt{3}\eta$ and $B = -u$, we get
$$\Re \left[S(z; t, x)- S(\crit^+;t, x)\right] = -\paren{ 2 \sqrt{-x} u + \eta^2 -u^2} \sqrt{3}\eta \leq -\paren{ \sqrt{-x} u + \eta^2} \sqrt{3}\eta \leq -\sqrt{3} \sqrt{-x} \eta^2,$$
which is stronger than (\ref{eq: upper3}). In the last inequalities we used $\eta \leq u \leq \sqrt{-x}$. 

The final statement in the lemma follows by reflection symmetry, and we omit the details.
\end{proof}

%
%
\subsection{Integral bounds}\label{Section4.2} In this section, we use the inequalities in Lemma \ref{Lem.Derivatives} to derive appropriate bounds for several contour integrals that arise later in our arguments. The latter are summarized in the following lemma.

\begin{lemma}Fix $t \in \mathbb{R}$, $x < 0$, $\eta > 0$, and $T > 0$. Let $S(z;t,x)$ be as in (\ref{Eq.DefS}) and assume the notation in Definition \ref{def: contour}. With $|dz|$ and $|dw|$ denoting integration with respect to arc-length, the following inequalities all hold. 
\begin{equation}\label{Eq.IntegrandUpperBound3}
\begin{aligned}
& \int_{\refbeth^{\pm,1}} |\diff z |\left|e^{T^{3/2} \left(S(z; t, x)- S(\crit^\pm;t, x)\right)}\right|
\le \left(\sqrt{-x} + \frac{1}{T^{3/2}\sqrt{-x} \eta} \right)e^{-T^{3/2}\sqrt{-x}\eta^2},\\
& \int_{\contour^{\pm,1}} |\diff w| \left|e^{-T^{3/2} \left(S(w; t, x)- S(\crit^\pm;t, x)\right)}\right|
\le \left(\sqrt{-x} + \frac{1}{T^{3/2}\sqrt{-x} \eta} \right)e^{-T^{3/2}\sqrt{-x}\eta^2},
\end{aligned}
\end{equation}

\begin{equation}\label{Eq.IntegrandUpperBound4}
\begin{aligned}
& \int_{\refbeth^{\pm,0}} |\diff z|  \left|e^{T^{3/2} \left(S(z; t, x)- S(\crit^\pm;t, x)\right)}\right|\le \frac{4}{T^{3/4}(-x)^{1/4}},\\
& \int_{\contour^{\pm,0}} |\diff w|  \left|e^{-T^{3/2} \left(S(w; t, x)- S(\crit^\pm;t, x)\right)}\right|\le \frac{4}{T^{3/4}(-x)^{1/4}}.
\end{aligned}
\end{equation}

\begin{equation}\label{Eq.IntegrandUpperBound5}
\begin{aligned}
&\int_{\refbeth^{\pm,0}} \abs{\diff z} \int_{\contour^{\pm,0}} \abs{\diff w}\, \frac{\left|e^{T^{3/2}\left(S(z;t,x)-S(w;t,x)\right)}\right|}{\abs{z-w}} \leq \frac{32}{T^{3/4} (-x)^{1/4}}, \\
&\int_{\refbeth^{\pm,0}} \abs{\diff z} \int_{\contour^{\mp,0}} \abs{\diff w}\, \frac{\left|e^{T^{3/2}\left(S(z;t,x)-S(w;t,x)\right)}\right|}{\abs{z-w}} \leq \frac{32}{T^{3/4} (-x)^{1/4}}.
\end{aligned}
\end{equation}
\end{lemma}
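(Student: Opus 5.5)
The plan is to parametrize each piece of the contours explicitly and feed the pointwise bounds of Lemma~\ref{Lem.Derivatives} into elementary one-dimensional integrals. By the reflection symmetry noted at the end of Lemma~\ref{Lem.Derivatives} (replacing $\refbeth$ by $\contour$ and $S$ by $-S$) and by complex conjugation, it suffices to prove the statements for $\refbeth^{+,\cdot}$. The second lines then follow verbatim.

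\textbf{Bound (\ref{Eq.IntegrandUpperBound3}).} The set $\refbeth^{+,1}$ consists of the vertical ray $z=\crit^++\sqrt3\eta+\im u$, $u\ge\eta$, and, when $\eta<\sqrt{-x}$, the segment $z=\crit^+-\sqrt3\eta-\im u$, $\eta\le u\le\sqrt{-x}$.
\begin{itemize}
\item On the ray, part (b) of Lemma~\ref{Lem.Derivatives} gives
\[
\int_\eta^\infty e^{-T^{3/2}\sqrt{-x}u\eta}\,du=\frac{e^{-T^{3/2}\sqrt{-x}\eta^2}}{T^{3/2}\sqrt{-x}\eta}.
\]
\item On the segment, part (c) bounds the integrand by $e^{-T^{3/2}\sqrt{-x}\eta^2}$. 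The segment has length at most $\sqrt{-x}$.
\end{itemize}
Adding the two contributions gives the claim.

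\textbf{Bound (\ref{Eq.IntegrandUpperBound4}).} Parametrize $\refbeth^{+,0}$ by $u\in[-\bar\eta,\eta]$, so that $|dz|=2\,du$. By part (a), the integral is at most
\[
2\int_{\R}e^{-2\sqrt3 T^{3/2}\sqrt{-x}u^2}\,du=2\sqrt{\frac{\pi}{2\sqrt3\,T^{3/2}\sqrt{-x}}}\le\frac{4}{T^{3/4}(-x)^{1/4}},
\]
since $\pi/(2\sqrt3)<4$.

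\textbf{Bound (\ref{Eq.IntegrandUpperBound5}).} This is the main obstacle, because $1/|z-w|$ is singular where the two diagonal segments cross at $\crit^\pm$.

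For the same-sign case, write
\[
z=\crit^++\sqrt3u+\im u,\qquad w=\crit^+-\sqrt3v+\im v .
\]
The exponent splits as
\[
\Re[S(z)-S(\crit^+)]-\Re[S(w)-S(\crit^+)],
\]
and the $\contour$-version of (a) gives the Gaussian bound $e^{-c(u^2+v^2)}$ with $c=2\sqrt3T^{3/2}\sqrt{-x}$. The two segments meet at angle $60^\circ$ (directions $e^{\im\pi/6}$ and $e^{\im5\pi/6}$). Hence
\[
|z-w|=2|ue^{\im\pi/6}-ve^{\im 5\pi/6}|\ge 2\sin(\pi/3)\sqrt{u^2+v^2}\cdot c'
\]
for an explicit constant $c'$, i.e. $|z-w|\gtrsim\sqrt{u^2+v^2}$. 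Passing to polar coordinates in $(u,v)$, the weight $1/r$ cancels the Jacobian $r$. This leaves
\[
4\int_0^{2\pi}\int_0^\infty e^{-cr^2}\,\frac{dr\,d\theta}{\kappa},
\]
which is of order $c^{-1/2}\sim T^{-3/4}(-x)^{-1/4}$. The remaining work is to track constants so that the total is at most $32$.

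For the opposite-sign case, $z\in\refbeth^{+,0}$ and $w\in\contour^{-,0}$ sit near different critical points. I will bound
\[
\Re[S(z)-S(w)]=\Re[S(z)-S(\crit^+)]-\Re[S(w)-S(\crit^-)],
\]
using that $\Re S(\crit^+)=\Re S(\crit^-)$, which follows from the explicit formula for $S(\crit^+)$. Part (a) again gives a product Gaussian bound. If $\bar\eta=\sqrt{-x}$ the segments can meet at the real axis, and I will use the same angular lower bound on $|z-w|$, measured from the meeting point. Otherwise the segments are separated, and $|z-w|\ge$ the same type of bound still holds. I then finish with the same polar computation. Checking the geometry in this second case is the delicate point.
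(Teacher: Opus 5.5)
Your treatment of (\ref{Eq.IntegrandUpperBound3}), (\ref{Eq.IntegrandUpperBound4}) and the same-sign half of (\ref{Eq.IntegrandUpperBound5}) matches the paper's argument. You parametrize, apply Lemma~\ref{Lem.Derivatives}, use $|z-w|^2=4(u^2+uv+v^2)\ge 2(u^2+v^2)$, and pass to polar coordinates; the constant comes out to $2\pi^{3/2}3^{-1/4}\approx 8.5$, well under $32$.

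The gap is the opposite-sign half of (\ref{Eq.IntegrandUpperBound5}). There you assert that $|z-w|\gtrsim\sqrt{u^2+v^2}$ ``still holds'' without proof, and the geometric picture you propose to use is wrong. The segments $\refbeth^{+,0}$ and $\contour^{-,0}$ never meet. When $\eta\ge\sqrt{-x}$, $\refbeth^{+,0}$ reaches the real axis at $t-\sqrt3\sqrt{-x}$, while $\contour^{-,0}$ reaches it at $t+\sqrt3\sqrt{-x}$. It is $\refbeth^{+,0}$ and $\refbeth^{-,0}$ that share an endpoint. Besides, an angular bound measured from some intersection point would be the wrong kind of estimate. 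Your Gaussian weight $e^{-c(u^2+v^2)}$ is centered at $u=v=0$, i.e.\ at $\crit^+$ and $\crit^-$, so what you must control is $|z-w|$ in terms of the $(u,v)$ variables measured from the critical points.

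The missing idea is the reflection used in the paper:
\begin{itemize}
\item On $\refbeth^{+,0}$ one has $\Im z=\sqrt{-x}+u\ge\sqrt{-x}-\bar\eta\ge 0$, and on $\contour^{-,0}$ one has $\Im w\le 0$, so $|z-w|\ge|z-\bar w|$.
\item $\Re S(w;t,x)=\Re S(\bar w;t,x)$, since $S$ has real coefficients.
\item $w\mapsto\bar w$ maps $\contour^{-,0}$ onto $\contour^{+,0}$ preserving arc length.
\end{itemize}
Hence the opposite-sign integral is dominated by the same-sign one, with no case split on $\bar\eta$. In your coordinates $z=\crit^++\sqrt3u+\im u$ and $w=\crit^--\sqrt3v-\im v$, this is the explicit inequality
\[
|z-w|^2=3(u+v)^2+\bigl(2\sqrt{-x}+u+v\bigr)^2\ge 3(u+v)^2+(u-v)^2\ge 2(u^2+v^2).
\]
It holds because $u,v\ge-\bar\eta\ge-\sqrt{-x}$ forces $2\sqrt{-x}+u+v\ge\abs{u-v}$. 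With this inequality in place, your polar-coordinate computation finishes the proof.
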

\begin{proof}
By combining \eqref{eq: upper2} and \eqref{eq: upper3}, we conclude
\begin{equation*}
\begin{aligned}
&\int_{\refbeth^{\pm,1}} |\diff z |\left|e^{T^{3/2} \left(S(z; t, x)- S(\crit^\pm;t, x)\right)}\right|
 \leq \1_{\eta < \sqrt{-x}} \int_{\eta}^{\sqrt{-x}} \diff u e^{-T^{3/2}\sqrt{-x}\eta^2} + \int_\eta^\infty \diff u e^{-T^{3/2} \sqrt{-x} \eta u} \\
& = \1_{\eta < \sqrt{-x}} \cdot (\sqrt{-x}-\eta)  e^{-T^{3/2}\sqrt{-x}\eta^2} + \frac{e^{-T^{3/2}\sqrt{-x} \eta^2}}{T^{3/2}\sqrt{-x} \eta}  \leq \left(\sqrt{-x} + \frac{1}{T^{3/2}\sqrt{-x} \eta}\right)e^{-T^{3/2}\sqrt{-x}\eta^2},
\end{aligned}
\end{equation*}
which establishes the first line in (\ref{Eq.IntegrandUpperBound3}). The second one is verified analogously.

From \eqref{eq: upper1} we obtain
\begin{equation*}
\begin{aligned}
&\int_{\refbeth^{\pm,0}} |\diff z|  \left|e^{T^{3/2} \left(S(z; t, x)- S(\crit^\pm;t, x)\right)}\right|
 =  2 \int_{-\bar{\eta}}^{\eta} du e^{-2\sqrt{3} T^{3/2} \sqrt{-x} u^2}\\
& \leq 2 \int_{-\infty}^{\infty} du e^{-2\sqrt{3} T^{3/2} \sqrt{-x} u^2}
= \frac{2\pi^{1/2}}{T^{3/4}2^{1/2}3^{1/4}(-x)^{1/4}} \leq \frac{4}{T^{3/4}(-x)^{1/4}},
\end{aligned}
\end{equation*}
which establishes the first line in (\ref{Eq.IntegrandUpperBound4}). The second one is verified analogously.

By parametrizing $z = \crit^\pm +\sqrt{3}u \pm \im u$ and $w = \crit^\pm +\sqrt{3}v \mp \im v$, we get
\begin{equation*}
\begin{aligned}
&\int_{\refbeth^{\pm,0}} \abs{\diff z}
\int_{\contour^{\pm,0}} \abs{\diff w}\,
\frac{\left|e^{T^{3/2}\left(S(z;t,x)-S(w;t,x)\right)}\right|}{\abs{z-w}} \leq 
\int_{-\infty}^{\infty} 2\,\diff u
\int_{-\infty}^{\infty} 2\,\diff v\,
\frac{e^{-T^{3/2}\sqrt{-x} [u^2 + v^2]}}{\sqrt{4u^2+4v^2 - 2\sqrt{3}uv + 2uv}} \\
& \leq 4\int_{-\infty}^{\infty} \diff u
\int_{-\infty}^{\infty} \diff v\,
\frac{e^{-T^{3/2}\sqrt{-x} [u^2 + v^2]}}{\sqrt{u^2+v^2}}  = 8\pi \int_0^{\infty}dr e^{-T^{3/2}\sqrt{-x}r^2} =  \frac{4 \pi^{3/2}}{T^{3/4} (-x)^{1/4}} \leq  \frac{32}{T^{3/4} (-x)^{1/4}},
\end{aligned}
\end{equation*}
where in the last line we changed to polar coordinates. This verifies the first line in (\ref{Eq.IntegrandUpperBound5}). 

Since $S(\bar z;t,x)=\overline{S(z;t,x)}$ for all $z\in\mathbb C$, and $|z-w| \geq |z - \bar{w}|$ for $z \in \contour^{\pm,0}$ and $w \in \refbeth^{\mp,0}$, we get
$$\int_{\refbeth^{\pm,0}} \abs{\diff z} \int_{\contour^{\mp,0}} \abs{\diff w}\, \frac{\left|e^{T^{3/2}\left(S(z;t,x)-S(w;t,x)\right)}\right|}{\abs{z-w}}\leq \int_{\refbeth^{\pm,0}} \abs{\diff z} \int_{\contour^{\pm,0}} \abs{\diff w}\, \frac{\left|e^{T^{3/2}\left(S(z;t,x)-S(w;t,x)\right)}\right|}{\abs{z-w}},$$
and so the second line in (\ref{Eq.IntegrandUpperBound5}) follows from the first. 
\end{proof}

%
%
\section{Convergence of joint moments}\label{Sec.CorrelationFunctionConvergence} In this section, we establish the convergence of the joint moments of the centered height function $H_T$. The main result is Proposition~\ref{Prop.CorrelationFunctionConvergence}, which serves as a key ingredient in the proof of Theorem~\ref{Thm.MomentConvergence}.

%
%
\subsection{Main proposition}\label{Section5.1} We begin by introducing some notation in the following definition and then state the main result of the section.

\begin{definition}\label{Def.ITs}
Fix $\ell \geq 1$. In what follows, indices are understood modulo $\ell$; in particular, we identify $\ell+1$ with $1$. Define
\begin{equation}\label{Eq.I_T}
\calI_T(t_1, x_1; \ldots; t_\ell, x_\ell) = \int_{Tx_1}^\infty \diff y_1 \cdots \int_{Tx_\ell}^\infty \diff y_\ell \prod_{j=1}^\ell \widetilde{K}(T^{1/2} t_j, y_j; T^{1/2} t_{j+1}, y_{j+1}),
\end{equation}
where $\widetilde{K}$ is as in \eqref{Eq.ExtendedAiryKernel}. We also define $\calI(t_1,x_1) = 0$ and for $\ell \geq 2$
\begin{equation}\label{Eq.I}
\calI(t_1, x_1; \ldots; t_\ell, x_\ell) = \frac{1}{(2\pi\im)^\ell} \int_{\overline{\crit(t_1, x_1)}}^{\crit(t_1, x_1)} \diff z_1 \cdots \int_{\overline{\crit(t_\ell, x_\ell)}}^{\crit(t_\ell, x_\ell)} \diff z_\ell \prod_{j=1}^\ell \frac{1}{z_j - z_{j+1}},
\end{equation}
where we recall from \eqref{Eq.RootsOfS} that $\crit(t,x) = t + \im \sqrt{-x}$. Lastly, define 
\begin{equation*}
P(t_1, x_1; \ldots; t_\ell, x_\ell) = \sum_{\substack{\sigma\in S_{\ell}\\ \sigma\textup{ has no fixed points}}}
\sign(\sigma) \prod_{\substack{(i_1\,\cdots\, i_\ell)\\ \textup{cycle of }\sigma}} \calI(t_{i_1},x_{i_1};\ldots;t_{i_\ell},x_{i_\ell}),
\end{equation*}
where the product is taken over cycles in the cycle decomposition of $\sigma$.
\end{definition}

\begin{proposition}\label{Prop.CorrelationFunctionConvergence} Fix $m \geq 2$,  $\e \in (0,1)$,  $\delta \in \left(0,\tfrac{1}{2m}\right)$, and $A > 0$. Let $H_T$ be as in (\ref{Eq.HeightFunction}) and assume the same notation as in Definition \ref{Def.ITs}. Then, as $T \to \infty$,
\begin{equation*}
\E\bigl[H_T(t_1,x_1)\cdots H_T(t_m,x_m)\bigr] \longrightarrow P(t_1, x_1; \ldots; t_m, x_m)
\end{equation*}
uniformly for
\begin{equation*}
    (t_1,x_1;\ldots;t_m,x_m)
    \in
    \bigl([-A,A]^2\cap\{(t,x):x\le -T^{-\e}\}\bigr)^m
    \cap
    \Bigl\{
        \min_{i\neq j}|t_i-t_j|>T^{-\delta}
    \Bigr\}.
\end{equation*}
\end{proposition}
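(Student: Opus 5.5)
The plan is to express the joint central moment through cumulants of the determinantal process, and then evaluate each cyclic kernel integral $\calI_T$ by steepest descent. Since the $t_j$ are distinct for $T$ large, the points $(T^{1/2}t_j, \cdot)$ lie on distinct time slices. By Lemma~\ref{lemma: Airykernel} and the standard expansion of joint cumulants of linear statistics of a determinantal point process, the joint cumulant $\kappa(h_T(t_{i_1},x_{i_1}),\ldots,h_T(t_{i_k},x_{i_k}))$ for distinct indices is
\[
(-1)^{k-1}\sum_{\text{cyclic orders}} \calI_T(t_{i_1},x_{i_1};\ldots;t_{i_k},x_{i_k}).
\]
The central moment $\E[H_T(t_1,x_1)\cdots H_T(t_m,x_m)]$ is the sum over set partitions of $\{1,\ldots,m\}$ into blocks of size at least $2$ of products of cumulants. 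Regrouped over fixed-point-free permutations $\sigma$, this reads
\[
\sum_\sigma \sign(\sigma)\prod_{\text{cycles}}\calI_T,
\]
which matches the structure of $P$. It therefore suffices to prove that for each $2\le \ell\le m$ and each cyclic order, $\calI_T(t_1,x_1;\ldots;t_\ell,x_\ell)\to\calI(t_1,x_1;\ldots;t_\ell,x_\ell)$ uniformly on the stated set. Absolute convergence of the $y$-integrals, needed for Fubini, will come from the contour representation below.

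For a fixed cycle, I rescale. Substitute $z_j\mapsto T^{1/2}z_j$, $w_j\mapsto T^{1/2}w_j$ and $y_j = T y_j'$ in \eqref{Eq.ExtendedAiryKernel}. The exponent becomes $T^{3/2}[S(z_j;t_j,y_j)-S(w_{j+1};t_{j+1},y_{j+1})]$ in the variables attached to each point. For fixed $j$, the $y_j$-dependence is $e^{-T^{3/2}y_j(z_j-w_j)}$, where $z_j$ comes from the kernel factor $\widetilde K(\cdot_j;\cdot_{j+1})$ and $w_j$ from $\widetilde K(\cdot_{j-1};\cdot_j)$. If the contours are chosen with $\Re(z_j-w_j)>0$, I can integrate $y_j$ over $[x_j,\infty)$ explicitly, producing
\[
\frac{e^{T^{3/2}[S(z_j;t_j,x_j)-S(w_j;t_j,x_j)]}}{T^{3/2}(z_j-w_j)}.
\]
Next I deform $z_j$ to $\refbeth(t_j,x_j,\eta)$ and $w_j$ to $\contour(t_j,x_j,\eta)$, taking $\eta=T^{-\delta'}$ with $\delta<\delta'$ small. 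The deformation may cross the poles $z_j=w_{j+1}$ coming from $1/(z_j-w_{j+1})$, because the contours for different $j$ are separated by the time gap $|t_j-t_{j+1}|>T^{-\delta}$ only after ordering. Each crossing contributes a residue that collapses to a single integral, in which $w_{j+1}$ runs between $\overline{\crit}$ and $\crit$ (the points where $\contour$ and $\refbeth$ meet). This residue bookkeeping is exactly what should produce the integrals from $\overline{\crit(t_j,x_j)}$ to $\crit(t_j,x_j)$ in \eqref{Eq.I}.

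The cleanest organization is per index $j$. The pair $(z_j,w_j)$ on $\refbeth\times\contour$, with factor $e^{T^{3/2}[S(z_j)-S(w_j)]}/(z_j-w_j)$, splits as a residue term plus a remainder. Here $\refbeth$ and $\contour$ cross, so the deformation to disjoint positions picks up the residue at $z_j=w_j$, which equals $\int_{\overline{\crit}}^{\crit}dw_j$ of the remaining factor. The remainder is a double integral that is small by Lemma~\ref{Lem.Derivatives} and the bounds \eqref{Eq.IntegrandUpperBound3}--\eqref{Eq.IntegrandUpperBound5}. Concretely, \eqref{Eq.IntegrandUpperBound5} gives $O(T^{-3/4}(-x)^{-1/4})$ near the critical points, \eqref{Eq.IntegrandUpperBound3} gives exponentially small contributions $e^{-T^{3/2}\sqrt{-x}\eta^2}$ elsewhere (valid because $x\le -T^{-\e}$ and $\eta$ is small compared with the relevant powers of $T$), and Lemma~\ref{Lem.Separated} controls the denominators. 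Multiplying out the $\ell$ factors, the leading term is the product of residue terms, which after the scaling $T^{1/2}$ cancels against the $T^{3/2}$ and Jacobians to give exactly $\calI$. Every other term contains at least one small factor times factors bounded by negative powers of $|t_i-t_j|\ge T^{-\delta}$; there are at most $m$ of these, giving $T^{m\delta}$. The condition $\delta<1/(2m)$ is what lets this growth be beaten by the $T^{-3/4}$-type gain, provided $\e$ and $\delta'$ are tuned appropriately.

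The main obstacle will be this last step: arranging the contour deformations so that the residue structure is exactly $\calI$, while obtaining bounds uniform in $x_j\in[-A,-T^{-\e}]$ and in nearly-coincident times. The denominators $1/(z_j-w_{j+1})$ are only bounded below by roughly $T^{-\delta}$ when different critical points come close. I will first try to handle them crudely, via $|z_j-w_{j+1}|\gtrsim\min(\eta,|t_j-t_{j+1}|)$ from the contour geometry, and accept polynomial losses $T^{c\delta}$, checking that the exponent bookkeeping closes. Uniformity of the limit $\calI$ itself is elementary, since $\calI$ is continuous away from coincident endpoints, and its integrable log-type singularities are harmless.
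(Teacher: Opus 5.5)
Your plan is essentially the paper's proof. The shared steps are the fixed-point-free cycle expansion of the central moment, explicit $y$-integration, deformation of $z_j,w_j$ to $\refbeth_j,\contour_j$ so that the all-residue term is exactly $\calI$, and the remaining terms killed by \eqref{Eq.IntegrandUpperBound3}--\eqref{Eq.IntegrandUpperBound5} and Lemma~\ref{Lem.Separated}, with a $T^{\ell\delta}$ loss from the cross-index denominators beaten by the $T^{-3/4+\e/4}$ gain. The differences are minor: you obtain the cycle expansion via cumulants where the paper expands $\det(A+\operatorname{diag}(b_1,\ldots,b_m))$, and your $\eta=T^{-\delta'}$ with $\delta<\delta'<1/2$ works as well as the paper's $\eta=\min_{i\neq j}|t_i-t_j|/8$.

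One correction: only the same-index poles $z_j=w_j$ are crossed. The poles $z_j=w_{j+1}$ never are, because the contours for distinct indices stay in disjoint vertical strips of width $2\sqrt{3}\eta$ around $t_j$. So your first account of the residues should be dropped in favor of the per-index one. Also, no tuning of $\e$ is needed, since $T^{\ell\delta-3/4+\e/4}\to 0$ for every $\e\in(0,1)$ once $\ell\delta<1/2$.
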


We end this subsection with the following lemma. It provides a representation for the joint moments of $H_T$ that will be used in the proof of Proposition~\ref{Prop.CorrelationFunctionConvergence}.

\begin{lemma}\label{Lem.JointMomentCycleDecomp1}
Let $H_T$ be as in (\ref{Eq.HeightFunction}) and assume the same notation as in Definition \ref{Def.ITs}. Fix $m \geq 1$,  $x_1, \ldots, x_m \in \mathbb{R}$ and pairwise distinct $t_1,\ldots,t_m \in \mathbb{R}$. Then for all $T > 0$, we have
\begin{equation}\label{Eq.MomentFormula}
\E\bigl[H_T(t_1,x_1)\cdots H_T(t_m,x_m)\bigr] =
\sum_{\substack{\sigma\in S_m\\ \sigma\textup{ has no fixed points}}}
\sign(\sigma) \prod_{\substack{(i_1\,\cdots\, i_\ell)\\ \textup{cycle of }\sigma}} \calI_T(t_{i_1},x_{i_1};\ldots;t_{i_\ell},x_{i_\ell}),
\end{equation}
where the product is taken over cycles in the cycle decomposition of $\sigma$.
\end{lemma}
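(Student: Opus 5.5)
The plan is to expand the product of centered variables, write each resulting mixed moment as a sum of correlation-function integrals of the determinantal process $M^{\mathsf{S};\mathcal{A}}$ with $\mathsf{S} = \{T^{1/2}t_1,\ldots,T^{1/2}t_m\}$, and then observe that all terms except those indexed by fixed-point-free permutations cancel. Since the $t_i$ are pairwise distinct, the points $(T^{1/2}t_i, y)$ lie on distinct vertical lines. By \eqref{Eq.HeightToMeasure} and scaling, $h_T(t_i,x_i) = M^{\mathsf{S};\mathcal{A}}(\{T^{1/2}t_i\}\times[Tx_i,\infty))$.

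\textbf{Step 1.} For any subset $J \subseteq \{1,\ldots,m\}$, the mixed moment $\E[\prod_{i\in J} h_T(t_i,x_i)]$ counts ordered $|J|$-tuples of points, one on each line $T^{1/2}t_i$, $i \in J$. Because the lines are distinct, these points are automatically distinct, so no diagonal (coincident-point) terms arise. By the correlation function formula for determinantal processes with reference measure $\mu_{\mathsf{S}}\times\mathrm{Leb}$ (\cite[Section 2]{Dimitrov26}) and Lemma~\ref{lemma: Airykernel},
\[
\E\Bigl[\prod_{i\in J} h_T(t_i,x_i)\Bigr] = \int \prod_{i\in J}\1\{y_i\ge Tx_i\}\, \det\bigl[\widetilde K(T^{1/2}t_i,y_i;T^{1/2}t_j,y_j)\bigr]_{i,j\in J}\prod_{i\in J} dy_i .
\]
Expanding the determinant over $\sigma \in S_J$, each term factorizes along the cycles of $\sigma$. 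After integrating, each cycle contributes exactly $\calI_T$ of Definition~\ref{Def.ITs}, with a fixed point $i$ contributing $\calI_T(t_i,x_i)=\E[h_T(t_i,x_i)]$.

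\textbf{Step 2.} Expand $\prod_i (h_T(t_i,x_i) - \E h_T(t_i,x_i))$ by inclusion--exclusion over the set $J$ of uncentered indices, and insert Step 1. This gives
\[
\sum_{J}(-1)^{m-|J|}\prod_{i\notin J}\calI_T(t_i,x_i)\sum_{\sigma\in S_J}\sign(\sigma)\prod_{\text{cycles}}\calI_T .
\]
Extend each $\sigma \in S_J$ to $\tilde\sigma \in S_m$ by fixing every index outside $J$; signs are unchanged. Regroup by $\tilde\sigma$, and let $F$ be its fixed-point set. The coefficient of $\tilde\sigma$ is then $\sum_{J \supseteq \{1,\ldots,m\}\setminus F}(-1)^{m-|J|}$, which equals $(1-1)^{|F|}$. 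This vanishes unless $F=\emptyset$, giving \eqref{Eq.MomentFormula}.

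\textbf{Main obstacle.} The combinatorics are routine. The main difficulty is justifying absolute integrability, so that the determinant expansion, the Fubini interchanges and the splitting into cycle integrals are legitimate; the integrals $\int_{Tx}^\infty dy$ run over unbounded ranges, and for $t_i < t_j$ the kernel is given by the second, nonsymmetric branch of \eqref{Eq.altAiryKer}. I would use \eqref{Eq.altAiryKer}, which relates $\widetilde K$ to the extended Airy kernel by the gauge factors, together with the Airy bounds \eqref{Eq.AiryFunctionBound}. For $t_1\ge t_2$ these give super-exponential decay of the kernel as $y\to+\infty$. For $t_1<t_2$, the factor $e^{-\lambda(t_1-t_2)}$ with $\lambda<0$ supplies exponential decay in $\lambda$, which combined with the bound $|\Ai| \le C$ yields decay of $\int_{-\infty}^0$ in $y_1,y_2\to+\infty$ as well. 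The gauge factors cancel around each cycle, so they can be ignored. Finiteness of all the moments involved then follows from \eqref{Eq.MomentBoundS2}.
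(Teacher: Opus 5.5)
Your proof is correct and takes essentially the same route as the paper: expand over the set of uncentered indices, apply the determinantal correlation formula on the disjoint sets $\{T^{1/2}t_k\}\times[Tx_k,\infty)$, and cancel every permutation that has fixed points. The paper does this cancellation at the integrand level, using $\det(A+\operatorname{diag}(b_1,\ldots,b_m))=\sum_{S}\det(A[S])\prod_{k\notin S}b_k$ with $b_k=-A_{kk}$ to obtain $\det[(1-\delta_{i,j})A_{i,j}]$ before applying Leibniz, which is the same $(1-1)^{|F|}$ computation you perform after expanding; your observation that the gauge factors cancel around each cycle, which reduces absolute integrability to $K^{\mathrm{Ext.Airy}}$, handles a Fubini step the paper leaves implicit.
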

\begin{proof}
The result is standard; nevertheless, we record its proof for completeness and to keep the exposition self-contained. By the definition of $H_T$ in \eqref{Eq.HeightFunction},
\begin{equation}\label{Eq.JointMomDecomp1}
\E[H_T(t_1, x_1) \cdots H_T(t_m, x_m)] = \sum_{S \sub \{1, \ldots, m\}} \E\ockets{\prod_{k \in S} h(T^{1/2}t_k, Tx_k)} \prod_{k \notin S} \left(-\E[h(T^{1/2}t_k, Tx_k)]\right).
\end{equation}
    
Combining (\ref{Eq.HeightToMeasure}) with the joint moment formula for determinantal point processes \cite[(2.13)]{Dimitrov26}, and Lemma \ref{lemma: Airykernel}, we conclude
\begin{equation}\label{Eq.JointMomDecomp2}
\begin{aligned}
& \E\ockets{\prod_{k \in S} h(T^{1/2}t_k, Tx_k)} \prod_{k \notin S} \left(-\E[h(T^{1/2}t_k, Tx_k)]\right)  = \int_{Tx_1}^\infty \diff y_1 \cdots \int_{Tx_m}^\infty \diff y_m \\
& \det[\widetilde{K}(T^{1/2} t_i, y_i; T^{1/2} t_j, y_j)]_{i, j \in S} \cdot \prod_{k \notin S} \left( -\widetilde{K}(T^{1/2} t_k, y_k; T^{1/2} t_k, y_k) \right).
\end{aligned}
\end{equation}
We mention that in applying \cite[(2.13)]{Dimitrov26}, we used that the sets $\{T^{1/2}t_k\} \times [Tx_k, \infty)$ are pairwise disjoint by assumption.

If $A$ is an $m\times m$ matrix and for each subset $S\subseteq\{1,\ldots,m\}$, we let $A[S]$ denote the principal submatrix of $A$ indexed by $S$, we have the trivial identity
\begin{equation*}
\det\bigl(A+\operatorname{diag}(b_1,\ldots,b_m)\bigr) = \sum_{S\subseteq \{1,\ldots,m\}} \det(A[S])\prod_{k\notin S} b_k.
\end{equation*}
Applying the latter with 
\begin{equation*}
A_{i,j} = \widetilde K(T^{1/2}t_i,y_i;T^{1/2}t_j,y_j), \qquad b_i = -\widetilde K(T^{1/2}t_i,y_i;T^{1/2}t_i,y_i),
\end{equation*}
and utilizing (\ref{Eq.JointMomDecomp1}) and (\ref{Eq.JointMomDecomp2}), we get
\begin{equation*}
\begin{aligned}
\E[H_T(t_1,x_1)\cdots H_T(t_m,x_m)] &=\int_{Tx_1}^\infty \diff y_1 \cdots \int_{Tx_m}^\infty \diff y_m 
\det\!\Bigl[(1-\delta_{i,j}) \widetilde K(T^{1/2}t_i,y_i;T^{1/2}t_j,y_j)\Bigr]_{i,j=1}^m.
\end{aligned}
\end{equation*}
The last equality and the Leibniz formula for the determinant imply (\ref{Eq.MomentFormula}).
\end{proof}

%
%
\subsection{Proof of Proposition \ref{Prop.CorrelationFunctionConvergence}}\label{Section5.2}
In view of Lemma \ref{Lem.JointMomentCycleDecomp1}, it suffices to show for each $\ell \in \{1, \ldots, m\}$
\begin{equation}\label{Eq.SDRed1}
\lim_{T \rightarrow \infty}\calI_T(t_1,x_1;\ldots;t_\ell,x_\ell) \longrightarrow \calI(t_1,x_1;\ldots;t_\ell,x_\ell),
\end{equation}
uniformly for
\begin{equation}\label{Eq.UniformConvSetL}
(t_1,x_1;\ldots;t_\ell,x_\ell) \in \bigl([-A,A]^2\cap\{(t,x):x\le -T^{-\e}\}\bigr)^\ell \cap \Bigl\{\min_{i\neq j}|t_i-t_j|>T^{-\delta}\Bigr\}.
\end{equation}

Without loss of generality, we may assume that
\[
t_1 < t_2 < \cdots < t_{\ell}.
\]
We organize the proof of (\ref{Eq.SDRed1}) into three steps. In Step~1, we derive an alternative representation of $\calI_T$ that involves the phase function $S$ from Section \ref{Section4.1}. In Step~2, we deform the contours in the representation of $\calI_T$ obtained in Step~1 to the steep-descent contours from Definition~\ref{def: contour}. The contour deformation crosses a collection of simple poles, and Cauchy's residue theorem yields the decomposition of $\calI_T$ as a sum of contour integrals in \eqref{Eq.I_TDissected}. In Step~3, we apply the estimates for the integrands along $\refbeth^\pm$ and $\contour^\pm$ obtained in Section \ref{Section4.2} and the dominated convergence theorem to conclude (\ref{Eq.SDRed1}). Throughout the proof, all indices are taken modulo $\ell$.
\newline
    
\noindent
\textbf{Step~1.} We choose
\begin{equation}\label{Eq.Eta}
\eta=\eta' := \min_{i\neq j}\frac{|t_i-t_j|}{8} > \frac{T^{-\delta}}{8}
\end{equation}
in the definition of $\widetilde{K}$ in \eqref{Eq.ExtendedAiryKernel}. With this choice of $\eta$ and $\eta'$, it follows that
\begin{equation*}
    T^{1/2}\eta + T^{1/2}t_i
    <
    T^{1/2}(-\eta) + T^{1/2}t_j
\end{equation*}
whenever $t_i<t_j$.
 
From \eqref{Eq.ExtendedAiryKernel} and (\ref{Eq.I_T}), we conclude
\begin{equation*}
\begin{aligned}
\calI_T(t_1, x_1; \ldots; t_\ell, x_\ell)
= \frac{1}{(2\pi\im)^{2\ell}} \int_{Tx_1}^\infty \diff y_1 \cdots \int_{Tx_\ell}^\infty \diff y_\ell \prod_{j=1}^\ell \int_{T^{1/2} \eta + T^{1/2} t_j + \im \mathbb{R}} \diff z_j \int_{-T^{1/2} \eta + T^{1/2} t_{j+1} + \im \mathbb{R}} \diff w_j \\
\frac{e^{z_j^3/3 - z_j^2 T^{1/2} t_j + z_j (T t_j^2 - y_j) - w_j^3/3 + w_j^2 T^{1/2} t_{j+1} - w_j (T t_{j+1}^2 - y_{j+1})}}{z_j - w_j}.
\end{aligned}
\end{equation*}

We change the order of integration using Fubini's theorem, and evaluate the resulting inner integrals, which are convergent because $\Re(-z_j + w_{j-1}) = -2T^{1/2} \eta < 0$, obtaining
\begin{equation*}
\begin{aligned}
\calI_T(t_1, x_1; \ldots; t_\ell, x_\ell) = &\frac{1}{(2\pi\im)^{2\ell}} \prod_{j=1}^\ell \int_{T^{1/2} \eta + T^{1/2} t_j + \im \mathbb{R}} \diff z_j \int_{T^{1/2} (-\eta) + T^{1/2} t_{j+1} + \im \mathbb{R}} \diff w_j  \\
& \prod_{j=1}^\ell \frac{e^{z_j^3/3 - z_j^2 T^{1/2} t_j + z_j (T t_j^2 - Tx_j) - w_j^3/3 + w_j^2 T^{1/2} t_{j+1} - w_j (T t_{j+1}^2 - Tx_{j+1})}}{(z_j - w_j) (z_j - w_{j-1})}.
\end{aligned}
\end{equation*}
    
Lastly, we make the change of variables $z_j \mapsto z_j/T^{1/2}, w_j \mapsto w_j/T^{1/2}$ for all $j$, and use the definition of $S(z; t, x)$ from \eqref{Eq.DefS} to obtain
\begin{equation}\label{eq: ITstep1}
\begin{aligned}
\calI_T(t_1, x_1; \ldots; t_\ell, x_\ell) = &\frac{1}{(2\pi\im)^{2\ell}} \prod_{j=1}^\ell \int_{ \eta + t_j + \im \mathbb{R}} \diff z_j \int_{-\eta + t_{j+1} + \im \mathbb{R}} \diff w_j \\
&  \prod_{j=1}^\ell \frac{e^{T^{3/2} \left(S(z_j; t_j, x_j) - S(w_j; t_{j+1}, x_{j+1})\right)}}{(z_j - w_j) (z_j - w_{j-1})}.
\end{aligned}
\end{equation}

\noindent
\textbf{Step~2.} Let $\contour_j = \contour(t_j, x_j, \eta)$, $\refbeth_j = \refbeth(t_j, x_j, \eta)$ be as in Definition \ref{def: contour}, where $\eta$ is as in (\ref{Eq.Eta}). We further set $\crit_j = \crit(t_j,x_j)$ and $\crit^{\pm}_j = \crit^{\pm}(t_j,x_j)$ as in (\ref{Eq.RootsOfS}). 

Starting from (\ref{eq: ITstep1}), we deform the vertical contour for $z_j$ to $\refbeth_j$, and we deform the vertical contour for $w_j$ to $\contour_{j+1}$. During this deformation, the only residues encountered are those coming from the simple poles of the factors $z_j-w_{j-1}$. Note that by our choice of $\eta$ in \eqref{Eq.Eta}, the poles corresponding to $z_j-w_j$ are not crossed: indeed, when $z_j\in \refbeth_j$ and $w_j\in \contour_{j+1}$,
\begin{equation*}
\abs{z_j-w_j} \geq \abs{\Re(z_j-w_j)} \geq \abs{t_j-t_{j+1}}-\abs{\Re(z_j)-t_j}-\abs{\Re(w_j)-t_{j+1}} \geq (8 - 2\sqrt{3})\eta >0.
\end{equation*}
By Cauchy's residue theorem, after the contour deformation \eqref{eq: ITstep1} becomes
\begin{equation*}
\begin{aligned}
\calI_T(t_1, x_1; \ldots; t_\ell, x_\ell) = & \sum_{J \subseteq \{1, \ldots, \ell\}} 
\frac{1}{(2\pi\im)^{2\ell-\abs{J}}}\prod_{k \notin J} \int_{\refbeth_k} \diff z_k \int_{\contour_k} \diff w_{k-1} \prod_{j \in J}\int_{\crit^-_j}^{\crit^+_j} \diff z_j \\
& \prod_{k \notin J} \frac{e^{T^{3/2}(S(z_k;t_k,x_k)-S(w_{k-1};t_k,x_k))}}
{(z_{k}-w_{k})(z_k-w_{k-1})}
\cdot \prod_{j \in J}  \,\frac{1}{z_{j-1}-z_j}.
\end{aligned}
\end{equation*}
In the last equation we have set $w_k = z_{k+1}$ when $k + 1 \in J$, and the $z_j$ integrals are over the vertical segments that pass through $t_j$ and connect $\crit_j^-$ to $\crit_j^+$ with upward orientation.
  
Furthermore, we write each $\refbeth_k$ as the disjoint (up to endpoints) union of $\refbeth^{+,0}_k$, $\refbeth^{+,1}_k$, $\refbeth^{-,0}_k$, $\refbeth^{-,1}_k$ (as in Definition~\ref{def: contour}), and likewise for $\contour_k$, which yields
\begin{equation}\label{Eq.I_TDissected}
\begin{aligned}
&\calI_T(t_1, x_1; \ldots; t_\ell, x_\ell)
= \sum_{J \subseteq \{1, \ldots, \ell\}} \frac{1}{(2\pi\im)^{2\ell-\abs{J}}}  \prod_{k \notin J} \sum_{\substack{\e_k \in \{+, -\}, \alpha_k \in \{0, 1\} \\ \hat{\e}_k \in \{+, -\}, \hat{\alpha}_k \in \{0, 1\}}} \int_{\refbeth_k^{\hat{\e}_k, \hat{\alpha}_k}} \diff z_k \int_{\contour_k^{\e_k, \alpha_k}} \diff w_{k-1}\\
&  \prod_{j \in J}\int_{\crit^-_j}^{\crit^+_j} \diff z_j  \prod_{k \notin J} \frac{e^{T^{3/2}(S(z_k;t_k,x_k)-S(w_{k-1};t_k,x_k))}}
{(z_{k}-w_{k})(z_k-w_{k-1})}
\cdot \prod_{j \in J}  \,\frac{1}{z_{j-1}-z_j}.
\end{aligned}
\end{equation}
\newline
    
\noindent
\textbf{Step~3.} Within \eqref{Eq.I_TDissected}, we note that the term corresponding to $J=\{1,\ldots,\ell\}$ is exactly equal to $\calI(t_1,x_1;\ldots;t_\ell,x_\ell)$, as defined in \eqref{Eq.I}. Consequently, to prove (\ref{Eq.SDRed1}), it suffices to show that terms corresponding to $J \subsetneq \{1, \ldots, \ell\}$ converge to zero uniformly over the set (\ref{Eq.UniformConvSetL}). 

In what follows we fix $J \subsetneq \{1, \ldots, \ell\}$. For each $k\notin J$, there are $16$ possible choices of
\begin{equation*}
(\e_k,\alpha_k,\hat{\e}_k,\hat{\alpha}_k)
\in \{+,-\}\times\{0,1\}\times\{+,-\}\times\{0,1\}.
\end{equation*}
Fix one such choice for every $k\notin J$, and call the corresponding term on the right side of (\ref{Eq.I_TDissected}) by $\mathsf{D}_T(J; \vec{\e}, \vec{\alpha})$. What remains is to show that 
\begin{equation}\label{Eq.SDRed2}
\lim_{T \rightarrow \infty}\mathsf{D}_T(J; \vec{\e}, \vec{\alpha}) = 0.
\end{equation}

By the definition of the contours and our choice of $\eta$ in \eqref{Eq.Eta}, we have for all $j \in J$ and $k \not \in J$
\begin{equation}\label{Eq.IntegrandUpperBound1V1}
\frac{1}{\abs{z_{j-1} - z_j}}, \frac{1}{\abs{z_{k} - w_{k}}}\leq \frac{1}{\eta}.
\end{equation}
In addition, if $(z_k,w_{k-1})\in (\refbeth^{\pm,1}_k\times\contour_k) \cup (\refbeth_k\times\contour^{\pm,1}_k)$, we have from Lemma \ref{Lem.Separated}
\begin{equation}\label{Eq.IntegrandUpperBound1V2}
\frac{1}{\abs{z_{k} - w_{k-1}}}\leq \frac{1}{\eta}.
\end{equation}

In order to bound the integrals in $\mathsf{D}_T(J; \vec{\e}, \vec{\alpha})$, it is convenient to define
\begin{equation*} J':=\{k\notin J:\alpha_k=1 \textup{ or } \hat{\alpha}_k=1\}, \qquad
J'':= \{k\notin J:\alpha_k=\hat{\alpha}_k=0\}.
\end{equation*}
In words, for $k\in J''$ both contours $\refbeth_k^{\hat{\e}_k, \hat{\alpha}_k}$ and $\contour_k^{\e_k, \alpha_k}$ are diagonal pieces, whereas for $k\in J'$, at least one contour segment is a vertical ray (when $\eta \geq \sqrt{-x_k}$) or a union of a vertical ray and a segment (when $\eta < \sqrt{-x_k}$), see Figure \ref{fig:braids}.

Using the above notation, (\ref{Eq.I_TDissected}), (\ref{Eq.IntegrandUpperBound1V1}), (\ref{Eq.IntegrandUpperBound1V2}) and the identity
\begin{equation*}
\int_{\crit^-_j}^{\crit^+_j} |\diff z_j|= 2 \sqrt{-x_j} ,
\end{equation*}
we get 
\begin{equation*}
\begin{split}
&\left|\mathsf{D}_T(J; \vec{\e}, \vec{\alpha})\right| \leq \frac{1}{(2\pi)^{2\ell-\abs{J}}}  \prod_{k \in J'} \int_{\refbeth_k^{\hat{\e}_k, \hat{\alpha}_k}} |\diff z_k| \int_{\contour_k^{\e_k, \alpha_k}} |\diff w_{k-1}| \frac{\left|e^{T^{3/2}(S(z_k;t_k,x_k)-S(w_{k-1};t_k,x_k))}\right|}
{\eta^2}\\
& \times  \prod_{k \in J''} \int_{\refbeth_k^{\hat{\e}_k, \hat{\alpha}_k}} |\diff z_k| \int_{\contour_k^{\e_k, \alpha_k}} |\diff w_{k-1}| \prod_{k \notin J} \frac{\left|e^{T^{3/2}(S(z_k;t_k,x_k)-S(w_{k-1};t_k,x_k))}\right|}
{|z_{k}-w_{k-1}| \eta}
\cdot \prod_{j \in J}  \frac{2 \sqrt{-x_j}}{\eta}. 
\end{split}
\end{equation*}
Combining the last inequality with \eqref{Eq.IntegrandUpperBound3}, \eqref{Eq.IntegrandUpperBound4}, and \eqref{Eq.IntegrandUpperBound5}, we conclude
\begin{equation}\label{Eq.MasterBoundD}
\begin{split}
&\left|\mathsf{D}_T(J; \vec{\e}, \vec{\alpha})\right| \leq \frac{1}{(2\pi)^{2\ell-\abs{J}}} \prod_{\{k \in J' : \alpha_k = 1\} \uplus \{k \in J' : \hat{\alpha}_k = 1\}} \frac{\left(\sqrt{-x_k} + \frac{1}{T^{3/2}\sqrt{-x_k} \eta} \right)e^{-T^{3/2}\sqrt{-x_k}\eta^2}}{\eta}\\
& \times \prod_{\{k \in J' : \alpha_k = 0\} \uplus \{k \in J' : \hat{\alpha}_k = 0\}} \frac{4}{T^{3/4} (-x_k)^{1/4} \eta}  \cdot  \prod_{k \in J''} \frac{32}{T^{3/4}(-x_k)^{1/4} \eta}
\cdot \prod_{j \in J}  \frac{2 \sqrt{-x_j}}{\eta}.
\end{split}
\end{equation}
where $\uplus$ denotes multiset union (with multiplicity).

Recall from \eqref{Eq.Eta} that $\eta > T^{-\delta}/8$, and recall the prevailing assumption from (\ref{Eq.UniformConvSetL}) that $T^{-\e} \leq -x_1, \ldots, -x_\ell \leq A$ for $T$ large enough and some fixed $A > 0$. If $\{k \in J' : \alpha_k = 1\}$ or $\{k \in J' : \hat{\alpha}_k = 1\}$ is nonempty, then (\ref{Eq.MasterBoundD}) implies for some $C_1 > 0$, depending on $\ell, A, \delta$, and $\e$, and all large $T$
\begin{equation}\label{Eq.BoundD1}
\begin{split}
&\left|\mathsf{D}_T(J; \vec{\e}, \vec{\alpha})\right| \leq \exp\left(C_1 \log T - T^{3/2 - \e/2 - 2\delta}/64\right).
\end{split}
\end{equation}
If $\{k \in J' : \alpha_k = 1\} = \{k \in J' : \hat{\alpha}_k = 1\} = \emptyset$, then (\ref{Eq.MasterBoundD}) implies for some $C_2 > 0$, depending on $\ell, A, \delta$, and $\e$, and all large $T$
\begin{equation}\label{Eq.BoundD2}
\begin{split}
&\left|\mathsf{D}_T(J; \vec{\e}, \vec{\alpha})\right| \leq C_2 \cdot T^{|J|\delta} \cdot T^{(2\abs{J'} + \abs{J''}) (-\frac{3}{4} + \delta + \frac{\e}{4})}.
\end{split}
\end{equation}
The inequalities (\ref{Eq.BoundD1}) and (\ref{Eq.BoundD2}) readily imply (\ref{Eq.SDRed2}), once we recall that $\delta < \frac{1}{2m}$, $\e < 1$, $|J| \leq m-1$, and $2\abs{J'} + \abs{J''} \geq 1$.

%
%
\section{Convergence of pairings}\label{sec: proof} In Section~\ref{Section6.1} we apply the results of Sections~\ref{Sec.MomentAsymptotics} and~\ref{Sec.CorrelationFunctionConvergence} to complete the proof of Theorem~\ref{Thm.MomentConvergence}. In Section~\ref{Section6.2} we briefly review the Gaussian free field and explain its connection to the limiting Gaussian random vector $(\xi_1,\ldots,\xi_m)$ appearing in Theorem~\ref{Thm.MomentConvergence}.

%
%
\subsection{Proof of Theorem~\ref{Thm.MomentConvergence}}\label{Section6.1}
For clarity, we split the proof of the theorem into two steps. In Step 1, we express $\E[\ang{H_T, \varphi_1} \cdots \ang{H_T, \varphi_m}]$ as a sum of $3^m$ terms, see (\ref{Eq.JointMomentAsSum}), and assume that all but one of the terms vanish as $T \rightarrow \infty$, see (\ref{Eq.VanishS6}). Subsequently, we use Proposition \ref{Prop.CorrelationFunctionConvergence} to show that the surviving term converges and identify its limit with the moment $\E[\xi_1\cdots\xi_m]$. In Step 2, we verify (\ref{Eq.VanishS6}) using the moment bounds from Section~\ref{Sec.MomentAsymptotics}.

\medskip

{\bf \raggedleft Step 1.} We fix $m \geq 1$ and $\varphi_1, \ldots, \varphi_m \in C_c^\infty(\R^2)$. In addition, we suppose that $A, B \geq 1$ are sufficiently large so that $\|\varphi_1\|_{\infty}, \ldots, \|\varphi_m\|_{\infty} \leq B$, and $S = [-A, A]^2$ contains the supports of $\varphi_1, \ldots, \varphi_m$. Lastly, we fix $\e \in (\frac{3}{5},1)$, and decompose $S = S^{(1)}_T \sqcup S^{(2)}_T \sqcup S^{(3)}_T$, where
\begin{equation*}
\begin{aligned}
S^{(1)}_T  = \{(t, x) \in S : T^{-\e} \leq x\}, \hspace{2mm} S^{(2)}_T = \{(t, x) \in S :|x| < T^{-\e}\}, \hspace{2mm}S^{(3)}_T  = \{(t, x) \in S : x \leq -T^{-\e}\}.
\end{aligned}
\end{equation*}

By Fubini's theorem, whose application is justified by the moment bounds in (\ref{Eq.MomentBoundS2}) and H{\"o}lder's inequality, we have
\begin{equation}\label{Eq.JointMomentAsSum}
\begin{aligned}
&\E[\ang{H_T, \varphi_1} \cdots \ang{H_T, \varphi_m}] = \sum_{i_1, \ldots, i_m \in \{1, 2, 3\}}R_T(i_1, \ldots, i_m), \mbox{ where }  \\
&R_T(i_1, \ldots, i_m) = \left(\prod_{j = 1}^m\int_{S^{(i_j)}_T } \diff t_j \diff x_j \right) 
\E[H_T(t_1, x_1) \cdots H_T(t_m, x_m)] \prod_{j=1}^m \varphi_j(t_j, x_j).
\end{aligned}
\end{equation}

We claim that unless $i_j = 3$ for all $j \in \{1, \ldots, m\}$, we have
\begin{equation}\label{Eq.VanishS6}
\begin{aligned}
\lim_{T \rightarrow \infty}R_T(i_1, \ldots, i_m) = 0.
\end{aligned}
\end{equation}
We establish (\ref{Eq.VanishS6}) in the next step. Here, we assume its validity and conclude the proof of the theorem.\\

Fix $\delta \in (0, \frac{1}{2m})$ and let $\Delta_T = \{(t_1, x_1; \ldots; t_m, x_m) : \min_{i \neq j} \abs{t_i - t_j} > T^{-\delta}\}$. By H\"{o}lder's inequality and the stationarity of the Airy line ensemble in the first inequality below, and then by Proposition~\ref{Prop.LowerTailCentralMoment} in the second inequality,
\begin{equation}\label{Eq.JointMomentPiece3Close}
\begin{aligned}
& \limsup_{T \to \infty} \Abs{\int_{(S^{(3)}_T)^m \setminus \Delta_T} \diff t_1 \diff x_1 \cdots \diff t_m \diff x_m \E[H_T(t_1, x_1) \cdots H_T(t_m, x_m)] \prod_{j=1}^m \varphi_j(t_j, x_j)} \\
& \leq \limsup_{T \to \infty} B^m \cdot \mathrm{Leb}\left((S^{(3)}_T)^m \setminus \Delta_T\right) \cdot  \sup_{x \in [-A, -T^{-\e}]}\E[\abs{H_T(0, x)}^m] \\
& \leq \limsup_{T \to \infty} B^m \cdot \binom{m}{2} 2 T^{-\delta} (2A)^{2m-1} \cdot C_1(m,A,\e)(\log T)^{m/2} = 0.
\end{aligned}
\end{equation}

We next observe that if $\calI$ and $P$ are as in Definition \ref{Def.ITs}, then
\begin{equation}\label{Eq.RewriteP}
\begin{aligned}
P(t_1, x_1; \ldots; t_m, x_m) = \sum_{\substack{\sigma \in S_m \\ \sigma \textup{ is a pairing}}} \sign(\sigma) \prod_{\substack{(i_1 \ i_2) \\ \textup{ cycle of } \sigma}} \calI(t_{i_1}, x_{i_1}; t_{i_2}, x_{i_2}),
\end{aligned}
\end{equation}
where the sum is over all permutations that are products of two-cycles, and in particular equals zero if $m$ is odd. The latter follows immediately from the formula for $P$ and the identity
\begin{equation*}
\sum_{\substack{\sigma \in S_m \\ \sigma \textup{ has no fixed points} \\ \sigma \textup{ contains a cycle of length at least } 3}} \sign(\sigma) \prod_{\substack{(i_1 \ \cdots \ i_\ell) \\ \textup{ cycle of } \sigma}} \calI(t_{i_1}, x_{i_1}; \ldots; t_{i_\ell}, x_{i_\ell}) = 0,
\end{equation*}
which holds by \cite[Lemma~7.3]{Kenyon08}.

Combining (\ref{Eq.RewriteP}) with the identity
\begin{equation*}
\calI(t_1, x_1; t_2, x_2) = \frac{1}{(2\pi\im)^2} \int_{\overline{\crit(t_1,x_1)}}^{\crit(t_1,x_1)} \diff z_1 \int_{\overline{\crit(t_2,x_2)}}^{\crit(t_2,x_2)} \diff z_2 \frac{1}{(z_1 - z_2)(z_2 - z_1)} = \frac{-1}{2\pi^2} \log\Abs{\frac{\crit(t_1,x_1) - \overline{\crit(t_2,x_2)}}{\crit(t_1,x_1) - \crit(t_2,x_2)}},
\end{equation*}
which follows from the definition of $\calI$ in (\ref{Eq.I}), we conclude
\begin{equation}\label{Eq.RewriteP2}
\begin{aligned}
P(t_1, x_1; \ldots; t_m, x_m) = \sum_{\substack{\sigma \in S_m \\ \sigma \textup{ is a pairing}}} \prod_{\substack{(i_1 \ i_2) \\ \textup{ cycle of } \sigma}} \frac{1}{2\pi^2} \log\Abs{\frac{\crit(t_{i_1},x_{i_1}) - \overline{\crit(t_{i_2},x_{i_2})}}{\crit(t_{i_1},x_{i_1}) - \crit(t_{i_2},x_{i_2})}}.
\end{aligned}
\end{equation}

Combining Proposition \ref{Prop.CorrelationFunctionConvergence} with the integrability of the right side of (\ref{Eq.RewriteP2}) over $[-A,A]^{2m}$, and the dominated convergence theorem, we conclude for $m \geq 2$
\begin{equation}\label{Eq.Conv3Piece}
\begin{aligned}
& \lim_{T \to \infty} \int_{(S^{(3)}_T)^m \cap \Delta_T} \diff t_1 \diff x_1 \cdots \diff t_m \diff x_m \E[H_T(t_1, x_1) \cdots H_T(t_m, x_m)] \prod_{j=1}^m \varphi_j(t_j, x_j) \\
& = \left(\prod_{j = 1}^m\int_{[-A,A]} dt_j \int_{[-A,0]} dx_j \right) P(t_1, x_1; \ldots; t_m, x_m) \prod_{j=1}^m \varphi_j(t_j, x_j).
\end{aligned}
\end{equation}
When $m = 1$, both sides of (\ref{Eq.Conv3Piece}) are equal to $0$ for all $T > 0$ and so the equality holds for all $m \geq 1$.

Combining (\ref{Eq.JointMomentAsSum}), (\ref{Eq.VanishS6}), (\ref{Eq.JointMomentPiece3Close}), and (\ref{Eq.Conv3Piece}), we conclude 
\begin{equation}\label{Eq.FinalMoment}
\begin{aligned}
&\lim_{T \rightarrow \infty} \E[\ang{\sqrt{\pi} H_T, \varphi_1} \cdots \ang{\sqrt{\pi} H_T, \varphi_m}] \\
&= \pi^{m/2}\left(\prod_{j = 1}^m\int_{[-A,A]} dt_j \int_{[-A,0]} dx_j \right) P(t_1, x_1; \ldots; t_m, x_m) \prod_{j=1}^m \varphi_j(t_j, x_j).
\end{aligned}
\end{equation}
Using (\ref{Eq.RewriteP2}), the fact that $\varphi_j$ are supported in $[-A,A]$, and $\mathcal{D}=\{(t,x)\in\mathbb{R}^2:x<0\}$, we see that the second line of (\ref{Eq.FinalMoment}) equals
 $$\sum_{\substack{\sigma \in S_m \\ \sigma \textup{ is a pairing}}} \prod_{\substack{(i_1 \ i_2) \\ \textup{ cycle of } \sigma}} \frac{1}{2\pi} \int_{\mathcal{D}} dt_{i_1} dx_{i_1} \int_{\mathcal{D}} dt_{i_2} dx_{i_2}  \log\Abs{\frac{\crit(t_{i_1},x_{i_1}) - \overline{\crit(t_{i_2},x_{i_2})}}{\crit(t_{i_1},x_{i_1}) - \crit(t_{i_2},x_{i_2})}} \varphi_{i_1}(t_{i_1},x_{i_1}) \varphi_{i_2}(t_{i_2},x_{i_2}).$$
From {\em Wick's formula}, see \cite[(3.2.21)]{Taqqu}, and (\ref{Eq.S1Covariance}), the last expression is precisely $\mathbb{E}[\xi_1 \cdots \xi_m]$. In other words, (\ref{Eq.FinalMoment}) shows that 
$$\lim_{T \rightarrow \infty} \E[\ang{\sqrt{\pi} H_T, \varphi_1} \cdots \ang{\sqrt{\pi} H_T, \varphi_m}] = \mathbb{E}[\xi_1 \cdots \xi_m],$$
which is the moment convergence statement we were after.\\

{\bf \raggedleft Step 2.} In this step we establish (\ref{Eq.VanishS6}). Suppose first that at least one of $i_1,\ldots,i_m$ is equal to $1$. By symmetry, we may assume without loss of generality that $i_1=1$. We observe the following tower of inequalities.
\begin{equation} \label{Eq.JointMomentPiece1}
\begin{aligned}
& \limsup_{T \to \infty}
        \Abs{
        \int_{S^{(i_1)}_T \times \cdots \times S^{(i_m)}_T}
        \diff t_1 \diff x_1 \cdots \diff t_m \diff x_m\,
        \E[H_T(t_1,x_1)\cdots H_T(t_m,x_m)]
        \prod_{j=1}^m \varphi_j(t_j,x_j)
        } \\
        & \leq \limsup_{T \to \infty} B^m \int_{S^{(1)}_T} \diff t_1 \diff x_1\, \E[\abs{H_T(0,x_1)}^m]^{\frac{1}{m}}
        \prod_{j=2}^m \int_{S^{(i_j)}_T} \diff t_j \diff x_j\, \E[\abs{H_T(0,x_j)}^m]^{\frac{1}{m}} \\
        & \leq \limsup_{T \to \infty} B^m (4A^2)^{m-1} 2^{\frac{m^2 - 1}{m}}\E[h_T(0,-A)^m]^{\frac{m-1}{m}} \int_{S^{(1)}_T} \diff t_1 \diff x_1\, C_4(m)^{1/m} (Tx_1)^{\frac{3}{2m}} e^{-\frac{4}{3m} (Tx_1)^{3/2}}  \\
        & \leq \limsup_{T \to \infty} B^m(4A^2)^{m} \cdot 2^{\frac{m^2 - 1}{m}} C_2(m,A)^{\frac{m-1}{m}}T^{\frac{3(m-1)}{2}} \cdot
         T^{\frac{3(1-\e)}{2m}} e^{-\frac{4}{3m} T^{3(1-\e)/2}} =0.
    \end{aligned}
\end{equation}
Let us elaborate on (\ref{Eq.JointMomentPiece1}) briefly. In going from the first to the second line, we used the triangle inequality to put the absolute value inside the integral, the inequality $\|\varphi_j\| \leq B$, H\"older's inequality and the stationarity of the Airy line ensemble. In going from the second to the third line we used Proposition \ref{Prop.UpperTailCentralMoment}, the fact that $\mathrm{Leb}(S_T^{(i)}) \leq 4A^2$, and the inequality 
$$\E[\abs{H_T(0,x_j)}^m] \leq 2^m \E[h_T(0,x_j)^m] + 2^m  \E[h_T(0,x_j)]^m \leq 2^{m+1}\E[h_T(0,-A)^m],$$
which follows from $|a-b|^m \leq 2^m a^m + 2^m b^m$ for $a,b \geq 0$, Jensen's inequality and the monotonicity of $h_T(t,x)$ in $x$. In going from the third to the fourth line we used Proposition \ref{Prop.LowerTailMoment}, the fact that $x \geq T^{-\e}$ for $x \in S_T^{(1)}$ and $\mathrm{Leb}(S_T^{(1)}) \leq 4A^2$. The last equality used $\e \in (\frac{3}{5},1)$.

We next consider the case in which all $i_1, \ldots, i_m \in \{2, 3\}$ and at least one of $i_1, \ldots, i_m$ is equal to $2$. Let $I_2 = |\{j: i_j = 2\}|$ and $I_3 = |\{j: i_j = 3\}|$. We observe the following tower of inequalities.
\begin{equation}
\label{Eq.JointMomentPiece2}
\begin{aligned}
&\limsup_{T \to \infty} \Abs{\int_{S^{(i_1)}_T \times \cdots \times S^{(i_m)}_T} \diff t_1 \diff x_1 \cdots \diff t_m \diff x_m \E[H_T(t_1, x_1) \cdots H_T(t_m, x_m)] \prod_{j=1}^m \varphi_j(t_j, x_j)} \\
&\leq \limsup_{T \to \infty} B^m \prod_{j : i_j = 2}  \int_{S^{(i_j)}_T} \diff t_j \diff x_j\, \E[\abs{H_T(0,x_j)}^m]^{\frac{1}{m}}
        \prod_{j : i_j = 3} \int_{S^{(i_j)}_T} \diff t_j \diff x_j\, \E[\abs{H_T(0,x_j)}^m]^{\frac{1}{m}}  \\ 
&\leq \limsup_{T \to \infty} B^m  \cdot  (4A T^{-\e})^{I_2} \left( 2^{m+1} \E[h_T(0,-T^{-\e})^m]\right)^{\frac{I_2}{m}}
        \cdot (4A^2)^{I_3}\sup_{x_j \in [-A, -T^{-\e}]} \hspace{-2mm} \E[\abs{H_T(0,x_j)}^m]^{\frac{I_3}{m}}  \\ 
&\leq \limsup_{T \to \infty} B^m  \cdot (4A T^{-\e})^{I_2} \left(  2^{m+1} C_2'(m,\e) T^{\frac{3m(1-\e)}{2}}\right)^{\frac{I_2}{m}}
\cdot (4A^2)^{I_3}C_1(m,A,\e)^{\frac{I_3}{m}}(\log T)^{\frac{I_3}{2}}  \\ 
& \leq \limsup_{T \to \infty} C(m,A,B,\e) \cdot T^{-\e I_2} \cdot T^{\frac{3I_2(1-\e)}{2}} \cdot (\log T)^{\frac{I_3}{2}} = 0.
\end{aligned}
\end{equation}
Let us elaborate on (\ref{Eq.JointMomentPiece2}) briefly. In going from the first to the second line, we used the triangle inequality to put the absolute value inside the integral, the inequality $\|\varphi_j\| \leq B$, H\"older's inequality and the stationarity of the Airy line ensemble. In going from the second to the third line, we used that $\mathrm{Leb}(S^{(2)}_T) \leq 4AT^{-\e}$, $\mathrm{Leb}(S^{(3)}_T) \leq 4A^2$, and the inequality
$$\E[\abs{H_T(0,x_j)}^m] \leq 2^m \E[h_T(0,x_j)^m] + 2^m  \E[h_T(0,x_j)]^m \leq 2^{m+1}\E[h_T(0,-T^{-\e})^m],$$
which follows from $|a-b|^m \leq 2^m a^m + 2^m b^m$ for $a,b \geq 0$, Jensen's inequality and the monotonicity of $h_T(t,x)$ in $x$. In going from the third to the fourth line we used Propositions \ref{Prop.LowerTailCentralMoment} and \ref{Prop.LowerTailMoment}. In the last inequality $C(m,A,B,\e)$ is independent of $T$ and the convergence to zero follows from our assumption that $\e \in (\frac{3}{5}, 1)$ and $I_2 \geq 1$.

Combining (\ref{Eq.JointMomentPiece1}) and (\ref{Eq.JointMomentPiece2}), we conclude (\ref{Eq.VanishS6}), which completes the proof of the theorem.

%
%
\subsection{Connection to the Gaussian free field}\label{Section6.2} In this section, we briefly review the Gaussian free field (GFF) and explain its connection to the limiting Gaussian random vector $(\xi_1,\ldots,\xi_m)$ appearing in Theorem~\ref{Thm.MomentConvergence}. For additional background on the GFF, we refer the reader to \cite{BP25, Sheffield07,WP21}. 

The GFF on the upper half-plane $\mathbb{H}$ is a random element of $(C_c^\infty( \mathbb{H} ))'$ uniquely determined by the following property: given $\varphi_1, \ldots, \varphi_m \in C_c^\infty( \mathbb{H} )$, $\left(\ang{\GFF, \varphi_1}, \ldots ,\ang{\GFF, \varphi_m}\right)$ is a zero-mean Gaussian random vector with covariance matrix
$$
\mathbb{E}\left[ \ang{\GFF, \varphi_j} \ang{\GFF, \varphi_k} \right]=\int_{\mathbb{H} \times \mathbb{H}}\left|d z_1\right|^2\left|d z_2\right|^2 \varphi_j\left(z_1\right) \varphi_k\left(z_2\right) \mathcal{G}\left(z_1, z_2\right) .
$$
Here, $|dz_i|^2 = du_i dv_i$ if $z_i = u_i + \im v_i$, and 
\begin{equation}\label{Eq.Green'sFunction}
\mathcal{G}(z, w):=-\frac{1}{2 \pi} \ln \left|\frac{z-w}{z-\bar{w}}\right|, \quad z, w \in \mathbb{H},
\end{equation}
is the Green function for the Laplace operator on $\mathbb{H}$ with Dirichlet boundary conditions. The existence of such a random element follows from \cite[Corollary 1.48 and Theorem 1.57]{BP25}.

Let $D$ be a regular domain, $f: D \rightarrow \mathbb{H}$ a diffeomorphism, and let $g(z) = (g_1(u,v), g_2(u,v))$ for $z = u + \im v$ be its inverse. We define the random element $\GFF^{f}$ of $(C_c^\infty(D))'$ by setting for $\varphi \in C^{\infty}_c(D)$
$$\ang{\GFF^f, \varphi} := \ang{\GFF, (\varphi \circ g) |J_{g}|},$$
where $J_{g}$ is the Jacobian defined for $z = u + \im v$ through
$$J_g(z) = J_{g}(u,v) = \det \begin{bmatrix} \partial_u g_1(u,v) & \partial_v g_1(u,v) \\ \partial_u g_2(u,v) & \partial_v g_2(u,v) \end{bmatrix}.$$
The random element $\GFF^{f}$ is called the $f$-pullback of the Gaussian free field on $\mathbb{H}$. The latter definition and the earlier property for $\GFF$ imply that, given $\varphi_1, \ldots, \varphi_m \in C_c^\infty(D)$, $\left(\ang{\GFF^{f}, \varphi_1}, \ldots ,\ang{\GFF^f, \varphi_m}\right)$ is a zero-mean Gaussian random vector with covariance matrix
\begin{equation}\label{Eq.Cov2}
\begin{split}
&\mathbb{E}\left[ \ang{\GFF^f, \varphi_j} \ang{\GFF^f, \varphi_k} \right]=\int_{\mathbb{H} \times \mathbb{H}}\left|d z_1\right|^2\left|d z_2\right|^2 | J_{g}(z_1)| | J_{g}(z_2)| \varphi_j\left(g(z_1)\right) \varphi_k\left(g(z_2)\right) \mathcal{G}\left(z_1, z_2\right) \\
& = \int_{D} dt_j dx_j \int_{D} dt_k dx_k \varphi_j(t_j, x_j) \varphi_k(t_k, x_k) \mathcal{G}(f(t_j,x_j), f(t_k,x_k)),
\end{split}
\end{equation}
where the last equality follows from a change of variables. 

\medskip

In our setup, we let $\crit$ be the diffeomorphism
\begin{equation}\label{Eq.CriticalPoint}
\crit(t, x) = t + \im \sqrt{-x}
\end{equation}
that maps $\mathcal{D} = \{(t,x)\in\mathbb{R}^2:x<0\}$ to the upper half-plane $\mathbb{H}$, and let $\GFFpullback$ denote the $\crit$-pullback of the Gaussian free field as above. Substituting $D = \mathcal{D}$, $f = \crit$, and (\ref{Eq.Green'sFunction}) into (\ref{Eq.Cov2}), we recognize the last line as the formula for $\mathbb{E}[\xi_j \xi_k]$ from (\ref{Eq.S1Covariance}). Consequently, $(\ang{\GFFpullback,\varphi_1},\ldots,\ang{\GFFpullback,\varphi_m}) \overset{d}{=} (\xi_1, \ldots, \xi_m)$.

%
\section*{Acknowledgements} E.D. was partially supported by Simons Foundation International through Simons Award TSM-00014004. 

\bibliographystyle{alpha}
\bibliography{reference}
\end{document}